\DeclareMathOperator{\GL}{GL}
\DeclareMathOperator{\PGL}{PGL}
\DeclareMathOperator{\SL}{SL}
\DeclareMathOperator{\sgn}{sgn}
\DeclareMathOperator{\Gal}{Gal}
\DeclareMathOperator{\Ind}{Ind}
\DeclareMathOperator{\St}{St}
\DeclareMathOperator{\N}{N}
\DeclareMathOperator{\tr}{Tr}
\DeclareMathOperator{\End}{End}
\DeclareMathOperator{\Aut}{Aut}
\DeclareMathOperator{\Sym}{Sym}
\DeclareMathOperator{\finite}{fin}
\DeclareMathOperator{\Sh}{Sh}
\newcommand{\F}{{\mathbf{F}}}
\newcommand{\bS}{{\mathbf{S}}}
\newcommand{\Z}{{\mathbf{Z}}}
\newcommand{\Q}{{\mathbf{Q}}}
\newcommand{\C}{{\mathbf{C}}}
\newcommand{\R}{{\mathbf{R}}}
\newcommand{\A}{{\mathbf{A}}}
\newcommand{\OO}{{\mathcal{O}}}
\newcommand{\gp}{{\mathfrak{p}}}
\newcommand{\eps}{\varepsilon}
\newcommand{\from}{{\colon}}
\newcommand{\isom}{\cong}
\newcommand{\injects}{\hookrightarrow}
\newcommand{\abs}[1]{\left\lvert #1 \right\rvert}
\newcommand{\set}[1]{\left\{ #1 \right\}}
\newcommand{\floor}[1]{\left\lfloor #1 \right\rfloor}
\newcommand{\tbt}[4]{\begin{pmatrix}#1 & #2\\#3 & #4\end{pmatrix}}
\newcommand{\ptbt}[4]{\left(\! \!\begin{pmatrix}#1 & #2\\#3 & #4\end{pmatrix}\!\!\right)}
\newcommand{\pnew}{{p\text{-new}}}
\newcommand{\pold}{{p\text{-old}}}
\numberwithin{equation}{subsection}
\newtheorem{theorem}{Theorem}[section]
\newtheorem{lemma}[theorem]{Lemma}
\newtheorem{cor}[theorem]{Corollary}
\newtheorem{prop}[theorem]{Proposition}
\newtheorem{problem}[theorem]{Problem}
\theoremstyle{definition}
\newtheorem{defn}[theorem]{Definition}
\newtheorem{rmk}[theorem]{Remark}
\newtheorem{example}{Example}
\title{On the computation of local components of a newform}
\subjclass[2010]{Primary: 11F70. Secondary: 11F11, 11Y99.}
\author{David Loeffler}
\address{Warwick Mathematics Institute \\ University of Warwick \\ Coventry CV4 7AL \\ UK}
\email{d.loeffler.01@cantab.net}
\thanks{The first author's research is supported by EPSRC Postdoctoral Fellowship EP/F04304X/2}
\author{Jared Weinstein}
\address{Mathematics Department \\ UCLA \\ Los Angeles, CA 90095-1555 \\ USA}
\email{jared@math.ucla.edu}
\thanks{The second author's research is supported by NSF Postdoctoral Fellowship DMS-0803089}
\begin{document}

\maketitle

\section{Introduction}

\subsection{The problem}

Let $f$ be a cuspidal newform for $\Gamma_1(N)$ with weight $k\geq 2$ and character $\varepsilon$. There are well-established methods for computing such forms using modular symbols, see~\cite{SteinModularForms}. Let $\pi_f$ be the corresponding automorphic representation of the ad{\`e}le group $\GL_2(\mathbf{A}_\Q)$. (As there are numerous possible normalisations, we shall recall the construction in section \ref{cuspforms} below.) This representation decomposes as a restricted tensor product over all places $v$ of $\Q$:
\[ \pi_f = \bigotimes_v\pi_{f,v}, \]
where $\pi_{f,v}$ is an irreducible admissible representation of $\GL_2(\Q_v)$. The isomorphism class of the infinite component $\pi_{f,\infty}$ is determined by the weight $k$ alone: it is the unique discrete series subrepresentation of the representation constructed via unitary induction from the character
\[\tbt{t_1}{*}{}{t_2}\mapsto \abs{\frac{t_1}{t_2}}^{k/2}\sgn(t_1)^k\]
of the Borel subgroup of $\GL_2(\R)$ \cite[prop.~5.21]{GelbartAutomorphicForms}. Similarly, the component $\pi_{f,p}$ for a prime $p\nmid N$ is the unramified principal series with Satake parameters equal to the roots of the polynomial $X^2 - a_p X + \eps(p) p^{k-1}$. We are led naturally to the following problem:

\begin{problem}\label{mainprob} Compute the local components $\pi_{f,p}$ for each prime $p\mid N$.
\end{problem}

Problem~\ref{mainprob} may be recast from an arithmetic perspective as a question about the local behavior of Galois representations. Suppose the Fourier coefficients of $f$ lie in a number field $E$. Let
\[\rho_{f,\lambda}\from\Gal(\overline{\Q}/\Q)\to\GL_2(E_\lambda)\]
be the family of $\lambda$-adic representations attached to $f$ by Deligne.

\begin{problem}\label{mainprob2} Compute the restriction of $\rho_{f,\lambda}$ to $\Gal(\overline{\Q}_p/\Q_p)$ for every $p\mid N$ and every prime $\lambda$ of $E$ not dividing $p$.
\end{problem}

Problems~\ref{mainprob} and~\ref{mainprob2} are equivalent. This is a consequence of the theorem of Carayol~\cite{CarayolSurLesRepresentations}, which we now summarize. For an irreducible admissible representation $\pi_p$ of $\GL_2(\Q_p)$, with coefficients in an arbitrary algebraically closed field of characteristic zero, let $\sigma(\pi_p)$ be the corresponding two-dimensional Frobenius-semisimple Weil-Deligne representation of $\Q_p$ under the Hecke correspondence. The Hecke correspondence is a modification of the local Langlands correspondence which has the convenient property of being invariant under automorphisms of the coefficient field; see~\cite{CarayolSurLesRepresentations}, \S 0.5. If the coefficient field is the $\lambda$-adic field $E_\lambda$, where $\lambda$ does not divide $p$, there is a further correspondence $\sigma(\pi_p)\mapsto\sigma^{\lambda}(\pi_p)$ between Frobenius-semisimple Weil-Deligne representations and continuous Frobenius-semisimple represtations $\Gal(\overline{\Q}_p/\Q_p)\to\GL_2(E_\lambda)$ due to Grothendieck; c.f.~\cite{TateNumberTheoreticBackground}.

Let $f$ be a cusp form as above. Carayol's theorem states that $\sigma^{\lambda}(\pi_{p,f})$ agrees with the restriction of $\rho_{f,\lambda}$ to $\Gal(\overline{\Q}_p/\Q_p)$. Therefore if one has access to $\pi_{f,p}$ as in Problem~\ref{mainprob} one could also compute $\rho_{f,\lambda}\vert_{\Gal(\overline{\Q}_p/\Q_p)}$, at least if one has an explicit construction of the Hecke correspondence.

Our approach is as follows. When $\pi_{f, p}$ is a principal series or special repesentation of $G = \GL_2(\Q_p)$, the problem is not difficult:
In these cases the isomorphism class of $\pi_{f,p}$ is determined by the $p$th Fourier coefficient of $f$ and those of its twists by an explicit set of Dirichlet characters of $p$-power conductor.  We are therefore reduced to the case of $\pi_{f,p}$ supercuspidal.  Here $\pi_{f, p}$ is of the form $\Ind_K^G \tau$, for a finite-dimensional representation $\tau$ of a maximal compact-modulo-centre subgroup $K \subseteq G$. Using results of Casselman on the structure of irreducible representations of $\GL_2(\Q_p)$, we find a canonical model of $\tau$ inside $\pi_{f, p}$, which can be explicitly realised as a subspace of the cohomology of a modular curve. We then compute this space, or rather its dual, using the explicit presentation of the homology of modular curves provided by modular symbols.

An alternative approach is provided by the local converse theorem of \cite[\S 27]{BushnellHenniartLocalLanglands}, which shows that an irreducible supercuspidal $G$-representation $\pi$ is determined by the local epsilon factors of its twists by a certain set of characters. While this approach is of enormous theoretical importance, it is less convenient from a computational perspective, as the description of the representation $\pi$ given by the epsilon factors of its twists is very inexplicit and hence difficult to relate to other more familiar parametrisations of the set of representations of $G$.

This article grew out of work done by the second author during his visit to the Magma Group in Sydney in 2009.  A partial version of the algorithm described here, in which $\pi_{f,p}$ is computed up to a quadratic twist, is implemented in the computational algebra system Magma~\cite{Magma}. The slightly more refined version presented here is the result of discussions between the authors at the workshop ``Computer methods for $L$-functions and automorphic forms'' hosted by the Centre de Recherches Math\'ematiques, Montr\'eal, in March 2010, and an implementation by the first author will be included in a forthcoming version of the Sage system \cite{Sage}.

\section{Basic Observations}

\subsection{Cusp forms and automorphic representations}
\label{cuspforms}

We begin by recalling how to construct automorphic representations from classical modular eigenforms, following \cite[\S 3.6]{Bump}. Let $G_\Q=\GL_2(\Q)$ and $G_{\A}=\GL_2(\A_\Q)$, and let $Z_\Q$ and $Z_{\A}$ be the centers of those groups. Let us fix a level $N$ and a Dirichlet character $\eps$ modulo $N$. Let $\omega$ be the adelization of $\eps$:  that is, $\omega$ is the unique character of $Z_\A / Z_\Q$ such that $\omega(\varpi_p) = \eps(p)$ for primes $p \nmid N$, where $\varpi_p\in \Q_p^\times\subset Z_\A$ is a uniformiser at $p$. Let $\omega_p$ be the restriction of $\omega$ to $\Q_p^\times$.

\begin{rmk} Note that the restriction of $\omega_p$ to $\Z_p^\times$ is the \textit{inverse} of the character of $\Z_p^\times$ inflated from the $p$-component of $\eps$.
\end{rmk}

We write
\[ K_0(N)=\set{\tbt{a}{b}{c}{d}\in \GL_2(\widehat \Z) \ \middle\vert\ c\equiv 0\pmod{N}},
\]
so that $K_0(N)$ is an open compact subgroup of $\GL_2(\A_\Q^{\finite})$. By \cite[Theorem 3.3.1]{Bump}, we have $G_\A = G_\Q G_\infty^+ K_0(N)$, where
\[G_\infty^+ = \set{g \in \GL_2(\R)\ \middle\vert\ \det(g) > 0}.\]
We define a character $\lambda$ of $K_0(N)$ by
\[ \lambda\ptbt a b c d = \prod_{p \mid N} \omega_p (d_p);\]
this clearly agrees with $\omega$ on $Z_\A \cap K_0(N)$.

Given an eigenform $f \in S_k(\Gamma_1(N),\eps)$, we define a function $\phi_f$ on $G_\A$ by
\[ \phi_f(\gamma g_\infty k_0) = f(g_\infty(i)) \cdot (ci+d)^{-k}\cdot (\det g_\infty)^{k / 2} \cdot \lambda(k_0),\]
for $\gamma \in G_\Q$, $g_\infty \in G_\infty^+$, and $k_0 \in K_0$. This is well-defined, since $G_\Q \cap G_\infty^+ K_0 = \Gamma_0(N)$ and $f$ satisfies $f(\frac{az + b}{cz + d}) = \eps(d)(cz + d)^k f(z)$ for $\tbt a b c d \in \Gamma_0(N)$; it is clear that $\phi_f(gz)=\omega(z)\phi_f(g)$ for $z\in Z_{\A}$.

The function $\phi_f$ in fact lies in the space $\mathcal{A}_0(\omega)$ of automorphic forms on $G_\A$ of central character $\omega$; see \cite[\S 3]{GelbartAutomorphicForms}. We define $\pi_f$ to be the linear span of the right translates of $f$ under $G_\A$;  then $\pi_f$ is an admissible smooth representation of $G_\A$. If $f$ is an eigenform, then it is well known that this representation is irreducible and decomposes as a restricted tensor product $\bigotimes'_v \pi_{f,v}$ of admissible irreducible representations of $\GL_2(\Q_v)$.

\subsection{New vectors}

Let us fix a prime $p$, and let $G=\GL_2(\Q_p)$. We recall a theorem of Casselman on smooth representations of $G$:

\begin{theorem}[{\cite{CasselmanOnSomeResults}}]
\label{newvector}
Let $\pi$ be an irreducible smooth representation of $G$ on a vector space $V$ (over a field of characteristic 0), with central character $\omega$. For large enough $r \geq 0$, there exists a nonzero vector $v \in V$ satisfying
\[\pi \ptbt a b c d v=\omega(d)v\]
for all $\tbt a b c d\in \GL_2(\Z_p)$ with $c\equiv 0\pmod{p^r}$. If $r$ is the minimal such exponent, then $v$ is unique up to scalar.
\end{theorem}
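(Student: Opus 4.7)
First, set up notation: let $K_0(p^r)\subset\GL_2(\Z_p)$ be the subgroup appearing in the statement, and for $r$ large enough that $\omega|_{\Z_p^\times}$ has conductor at most $p^r$, let $\chi_r\from K_0(p^r)\to\C^\times$ be the character $\chi_r\ptbt{a}{b}{c}{d}=\omega(d)$; multiplicativity follows from the observation that if $k_1,k_2\in K_0(p^r)$ then the lower-right entry of $k_1 k_2$ lies in $d_1 d_2(1+p^r\Z_p)$, on which $\omega$ is trivial. Writing $V(r)$ for the $(K_0(p^r),\chi_r)$-eigenspace in $V$, the claim becomes: $V(r)\neq 0$ for $r\gg 0$, and $\dim V(r_0)=1$ at the minimal such $r_0$.

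For existence I would run a smoothness-plus-averaging argument. By smoothness any nonzero $v_0\in V$ is fixed by some principal congruence subgroup, hence by $\ker\chi_r$ inside $K_0(p^r)$ once $r$ is large; averaging $v_0$ over the finite abelian quotient $K_0(p^r)/\ker\chi_r$ against $\chi_r^{-1}$ then produces a candidate vector in $V(r)$. Nonvanishing of the average for some sufficiently large $r$ is the delicate point, and can be established either a posteriori from the classification used below, or directly by examining the finite-dimensional subrepresentations of $V$ under $K_0(p^r)$.

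Uniqueness at the minimal conductor is the substantive content, and I would prove it by case analysis following Casselman's classification of irreducible smooth representations of $G=\GL_2(\Q_p)$. In the principal series case $\pi\isom\Ind_B^G(\chi_1\otimes\chi_2)$, the Iwasawa decomposition $G=BK$ identifies $\pi$ with functions on $K$ transforming appropriately under $B\cap K$, and the Bruhat cell structure on $K$ reduces the computation of $V(r)$ to a concrete combinatorial object whose dimension one computes directly, yielding $1$ at $r_0=a(\chi_1)+a(\chi_2)$. The special (Steinberg twist) case is handled as a subquotient of a reducible principal series, reducing to the previous analysis. For supercuspidal $\pi$, where no parabolic realization is available, I would work in the Kirillov model of $\pi$ on $\Q_p^\times$: using the vanishing of the Jacquet module $J_B(\pi)$ together with the Iwahori factorization $K_0(p^r)=(K_0(p^r)\cap N^-)(K_0(p^r)\cap T)(K_0(p^r)\cap N)$, one shows that a $(K_0(p^r),\chi_r)$-eigenvector is pinned down up to scalar by a single value of its Kirillov function.

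The main obstacle is the supercuspidal case: the absence of a parabolic description of $\pi$ forces a more delicate analysis inside the Kirillov model, and one must invoke $J_B(\pi)=0$ in an essential way to rule out extra eigenvectors, rather than extracting uniqueness as a byproduct of an explicit function-theoretic computation on $K$.
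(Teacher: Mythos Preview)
The paper does not prove this theorem at all: it is stated with the citation \cite{CasselmanOnSomeResults} and used as a black box, so there is no ``paper's own proof'' to compare against. Your proposal is therefore not competing with anything in the text; you have supplied a sketch where the authors simply invoke the literature.

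As to the sketch itself: the overall shape---existence via smoothness, then uniqueness by running through principal series, special, and supercuspidal cases---is indeed the route taken in Casselman's original paper, and the Kirillov-model argument for the supercuspidal case is the right idea. One genuine gap you already flag: the averaging argument for existence can return zero, and you defer the nonvanishing either to the classification or to an unspecified analysis of the $K_0(p^r)$-action. In Casselman's treatment existence and the dimension formula are obtained together inside the Kirillov model (for all infinite-dimensional $\pi$, not just supercuspidals), which sidesteps this issue; your split into ``existence by averaging'' followed by ``uniqueness by cases'' is organizationally cleaner to state but leaves exactly the hole you identify. If you want a self-contained argument, it is simpler to work in the Kirillov model throughout and compute $\dim V(r)$ directly for each $r$, from which both existence and uniqueness at the minimal $r$ drop out simultaneously.
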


\begin{rmk}
 Our conventions differ slightly from those of \cite{CasselmanOnSomeResults}. Casselman uses $\omega(a)$ rather than $\omega(d)$, but his theorem and Thm.~\ref{newvector} are clearly equivalent.
\end{rmk}

\begin{defn} With the notation of Thm.~\ref{newvector}, the {\em conductor} of $\pi$ is $r=r(\pi)$, and a {\em new vector} for $\pi$ is $v$.
\end{defn}

One checks easily that the space of new vectors is preserved by the endomorphism
\[ \mathbf{T}_p =
\frac{1}{\sqrt{p}} \times \begin{cases}
\sum_{i = 0}^{p-1} \pi \ptbt p i 0 1 & (r \ge 1)\\
\sum_{i = 0}^{p-1} \pi \ptbt p i 0 1 + \omega(p) \cdot \pi \ptbt 1 0 0 p & (r = 0)
\end{cases}.\]
Since the space of new vectors is one-dimensional, we must have $\mathbf{T}_p v = \lambda v$ for some scalar $\lambda$. We define $\lambda(\pi)$ to be this scalar.

Smooth irreducible representations of $G$ (other than the trivial 1-dimensional representations factoring through the determinant) fall into three classes, which we summarise in Table~\ref{reptable} below (see for instance~\cite[\S 9.11]{BushnellHenniartLocalLanglands})

\begin{table}[ht]
\begin{tabular}{|l|cccc|}
\hline
Class & Notation & Conductor & Central char. & $\lambda(\pi)$ \\
\hline
Principal Series & $\pi(\chi_1,\chi_2)$ & $f_1+f_2$ & $\chi_1\chi_2$ & $\chi_1^*(p)+\chi_2^*(p)$\\
Special & $\St\otimes\chi_1$ & $\min(2f_1,1)$ & $\chi_1^2$ & $p^{-1/2}\chi_1^*(p)$ \\
Supercuspidal & $\Ind_K^G \tau$ & $\geq 2$ & $\omega_\tau$ & 0\\
\hline
\end{tabular}
\caption{The three classes of irreducible infinite-dimensional smooth representations of $G = \GL_2(\Q_p)$}
\label{reptable}
\end{table}

In the table, $\chi_1$ and $\chi_2$ are characters of $\Q_p^\times$ of conductors $p^{f_1}$ and $p^{f_2}$, and
\[
\chi_i^*(p)=\begin{cases}
\chi_i(p),&\text{$\chi_i$ is unramified} \\
0,&\text{$\chi_i$ is ramified}.
\end{cases}
\]

The representation denoted $\pi(\chi_1,\chi_2)$ is the principal series representation attached to the characters $\chi_1$ and $\chi_2$, defined whenever $\chi_1 / \chi_2 \ne |\cdot|^{\pm 1}$. The representation denoted $\St$ is the irreducible infinite-dimensional quotient of $\Ind_B^G 1$ (unnormalized induction), where $B\subset G$ is the subgroup of upper-triangular matrices.  The supercuspidal representations of $G$ are induced from certain finite-dimensional characters $\tau$ of compact-mod-center subgroups $K\subset G$;  these will be reviewed in \S\ref{Generalitiescuspidaltypes}. We have written $\omega_\tau$ for the central character of $\tau$.

If $f$ is a $p$-new eigenform, the automorphic form $\phi_f$ constructed above is a new vector for $\pi_{f, p}$ for every prime $p$.  One checks that $\lambda(\pi_{f, p}) = a_p(f) / p^{(k-1)/2}$. In particular, if $p \nmid N$ then $\pi_{f, p} \isom \pi(\chi_1, \chi_2)$ with $\chi_1$ and $\chi_2$ the unramified characters whose values at $p$ are $\{\alpha / p^{(k-1)/2}, \beta / p^{(k-1)/2}\}$, for $\alpha$ and $\beta$ the roots of $X^2 - a_p X +  \omega(p)p^{k-1}$.

\subsection{Twisting}

In this section we make an important reduction, which in particular will allow us to immediately dispose of the non-supercuspidal cases of Problem~\ref{mainprob}.

\begin{defn}
We say $\pi$ is \textit{primitive} if $r(\pi) \le r \left(\pi \otimes (\psi \circ \det)\right)$ for any character $\psi$ of $\Q_p^\times$.
\end{defn}

The principal series representation $\pi(\chi_1, \chi_2)$ is primitive if and only if (up to exchanging $\chi_1$ and $\chi_2$) the character $\chi_2$ is unramified, in which case the conductor of $\pi(\chi_1,\chi_2)$ equals the conductor of $\chi_1$. Similarly, $\St \otimes \chi$ is primitive if and only if $\chi$ is unramified, in which case it has conductor 1 and unramified central character.

The notions of twisting by a character and of primitivity have the following global analogues. For $f \in S_k(\Gamma_1(N), \eps)$ a new eigenform, and $\mu$ a Dirichlet character modulo $M$, the twist $f_\mu$ is the modular form (of level at most $N M^2$) whose Fourier coefficients are given by $a_n(f_\mu) = \mu(n) a_n(f)$. We write $f \otimes \mu$ for the unique {\em newform} whose $p$th Fourier coefficient is $\mu(p) a_p(f)$ for all but finitely many primes $p$;  this may have level smaller than $N M^2$. Then $f\otimes \mu$ is the newform corresponding to the automorphic representation $\pi_f \otimes \left(\psi\circ\det\right)$, where $\psi$ is the adelic character corresponding to $\mu$ as in \S\ref{cuspforms}.

Later we will need the following technical observation:

\begin{lemma}\label{cmtwist}
 Let $f$ be a newform of weight $k \ge 2$, and let $p$ be prime. If there exists a nontrivial character $\chi$ of $p$-power conductor such that $f \otimes \chi = f$, then $\chi$ is quadratic with $\chi(-1) = -1$.
\end{lemma}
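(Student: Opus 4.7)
I would argue the quadratic-ness and the sign $\chi(-1) = -1$ separately.

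\emph{Quadratic.} Let $\psi$ denote the idele class character of $\Q$ corresponding to $\chi$ as in \S\ref{cuspforms}. The representation $\pi_f \otimes (\psi \circ \det)$ has central character $\omega \cdot \psi^2$, where $\omega$ is the central character of $\pi_f$. Since $f \otimes \chi = f$ translates to $\pi_f \cong \pi_f \otimes (\psi \circ \det)$, the central characters must agree, so $\psi^2 = 1$. Hence $\chi$ is a nontrivial quadratic character.

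\emph{Sign.} The plan is to realise $\pi_f$ as the automorphic induction of a Hecke character from the quadratic extension $K/\Q$ cut out by $\chi$, and then to rule out $K$ being real quadratic using the archimedean place. Pass to the $\lambda$-adic Galois representation $\rho_{f,\lambda}$: by Carayol's theorem (discussed in the introduction) the self-twist becomes $\rho_{f,\lambda} \otimes \chi \cong \rho_{f,\lambda}$. A Clifford--Schur argument then identifies $\rho_{f,\lambda} \cong \Ind_{G_K}^{G_\Q}(\eta)$ for some character $\eta$ of $G_K$. Now restrict to the archimedean Weil group: if $K$ were real quadratic then complex conjugation would fix $K$ pointwise, so $W_\R \subset W_K$, and Mackey's formula would present the L-parameter of $\pi_{f,\infty}$ as the reducible sum $\eta|_{W_\R} \oplus \eta^\sigma|_{W_\R}$ of two characters of $W_\R$. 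But for $k \ge 2$ the L-parameter of the discrete series $\pi_{f,\infty} = D_k$ is an \emph{irreducible} $2$-dimensional representation of $W_\R$; this is precisely where the hypothesis $k \ge 2$ enters (for $k = 1$ the parameter is reducible, and real-quadratic CM is well known to occur in weight $1$). This contradiction forces $K$ to be imaginary quadratic, and hence $\chi(-1) = -1$.

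The main obstacle is the Clifford step. An intertwiner $T \from \rho_{f,\lambda} \to \rho_{f,\lambda} \otimes \chi$ commutes with $\rho_{f,\lambda}|_{G_K}$ (since $\chi$ is trivial on $G_K$), so by Schur's lemma together with the irreducibility of $\rho_{f,\lambda}$, either $T$ is a scalar --- in which case $\chi$ itself must be trivial, a contradiction --- or $\rho_{f,\lambda}|_{G_K}$ decomposes as $\eta \oplus \eta^\sigma$, which immediately yields the induction statement. Once this is in hand, the Mackey computation at infinity is routine, and the rest of the argument unfolds as above.
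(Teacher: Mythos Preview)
Your central-character argument for $\chi^2=1$ is correct and elegant; the paper does not isolate this step but instead argues everything at once via the classical theory of CM forms: since $f\otimes\chi=f$ forces $a_\ell(f)=0$ whenever $\chi(\ell)\neq 1$, and since for non-CM newforms of weight $\geq 2$ the set $\{\ell : a_\ell(f)=0\}$ has density zero, the form $f$ must have CM, and $\chi$ must be the CM character --- which is attached to an imaginary quadratic field.

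Your sign argument, however, has a genuine gap at the archimedean step. Having established the Galois-side induction $\rho_{f,\lambda}\cong\Ind_{G_K}^{G_\Q}(\eta)$, you ``restrict to the archimedean Weil group'' and identify the L-parameter of $\pi_{f,\infty}$ with $\eta|_{W_\R}\oplus\eta^\sigma|_{W_\R}$. But $\rho_{f,\lambda}$ lives on $G_\Q=\Gal(\overline\Q/\Q)$, whose decomposition group at $\infty$ is only $\{1,c\}$; there is no copy of $W_\R$ inside $G_\Q$, and the archimedean L-parameter of $\pi_f$ is not recoverable from $\rho_{f,\lambda}(c)$. Restricting to $c$ yields only $\{\eta(c),\eta^\sigma(c)\}=\{1,-1\}$ (by oddness), which is no contradiction: when $K$ is real, its two real places give two complex conjugations in $G_K$ that need not be conjugate there, and nothing prevents $\eta$ from taking opposite signs on them.

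The idea is salvageable, but it requires a step you have not supplied: one must show that the $\lambda$-adic character $\eta$ arises from an algebraic Hecke character $\Theta$ of $K$ (e.g.\ via the classification of geometric $p$-adic characters), and then use that Dirichlet's unit theorem forces every algebraic Hecke character of a \emph{real} quadratic field to have parallel infinity type, so that its automorphic induction at $\infty$ is a principal series of $\GL_2(\R)$ rather than a discrete series of weight $k\geq 2$. Alternatively, bypass the Galois side and invoke Labesse--Langlands to write $\pi_f$ itself as an automorphic induction, after which your Mackey computation at $\infty$ becomes legitimate. Either route is substantially heavier than what you wrote, and heavier than the paper's density argument.
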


\begin{proof} If $f$ is a newform of weight $k \ge 2$ and $f$ does not have CM, then the set of primes $\ell$ such that $a_\ell(f) = 0$ is well known to have density zero. If $f \otimes \chi = f$, then we must have $a_\ell(f) = 0$ for all $\ell$ such that $\chi(\ell) \ne 1$; so if $\chi$ is nontrivial, $f$ must be CM, and $\chi$ the unique quadratic character corresponding to the CM field of $f$. In particular, $\chi(-1) = -1$.
\end{proof}

\begin{defn}\cite[p.236]{AtkinLiPseudoEigenvalues} The newform $f$ is {\em $p$-primitive} if the $p$-part of its level is minimal among all its twists.
\end{defn}

Clearly, $f$ is $p$-primitive if and only if $\pi_{f, p}$ is a primitive representation of $G$. If $f$ is $p$-primitive, then its $p$th Fourier coefficient, its central character and its weight already determine $\pi_{f,p}$ completely, {\em{unless}} $\pi_{f,p}$ is supercuspidal:

\begin{prop}
\label{supercuspidalcriterion}
 Let $\pi=\pi_{f,p}$ be the local component at $p$ of a $p$-primitive newform $f\in S_k(\Gamma_1(Np^r), \eps)$, with $p \nmid N$ and $r \ge 1$. Let $\omega_p$ be the $p$-component of the character of $\A^\times_\Q / \Q^\times$ corresponding to $\eps$.
\begin{enumerate}
\item If $r \ge 1$ and the conductor of $\omega_p$ is $p^r$, then $\pi\isom\pi(\chi_1,\chi_2)$, where $\chi_1$ is unramified, $\chi_1(p)=a_p(f)/p^{(k-1)/2}$, and $\chi_2$ is determined by $\chi_1\chi_2=\omega_p$.
\item If $r=1$ and $\omega_p$ is unramified, then $\pi_p\isom\St\otimes\chi$, where $\chi$ is the unramified character with $\chi(p)=a_p(f)/p^{(k-2)/2}$.
\item If neither of the above conditions hold, then $\pi$ is supercuspidal and the conductor of $\omega_p$ is at most $\floor{r/2}$.
\end{enumerate}
\end{prop}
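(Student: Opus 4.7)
The plan is to read off each case from the classification table (Table~\ref{reptable}), after translating the primitivity hypothesis into a constraint on the inducing data in each of the three classes.

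First I would work out what primitivity says class-by-class. For a primitive principal series $\pi(\chi_1,\chi_2)$, after possibly swapping, $\chi_2$ must be unramified, so that the conductor $f_1+f_2$ equals $f_1$ and in particular the central character $\omega_p=\chi_1\chi_2$ has the \emph{same} conductor $p^{f_1}=p^r$ as $\pi$. For a primitive $\St\otimes\chi_1$, the character $\chi_1$ must be unramified, forcing $r=1$ and $\omega_p=\chi_1^2$ unramified. For a supercuspidal $\pi$, one has $r\ge 2$ automatically.

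With this in hand, parts (1) and (2) become essentially tautological. In the setting of (1), $r\ge 1$ and $\omega_p$ has conductor $p^r$; the special case requires $r=1$ with $\omega_p$ unramified and is ruled out, while the supercuspidal case will be ruled out by the bound of part (3) (which I turn to below). So $\pi$ is a primitive principal series, necessarily of the form $\pi(\chi_1,\chi_2)$ with $\chi_2$ unramified; then $\chi_2(p)$ is pinned down by $\chi_1\chi_2=\omega_p$ and the identity $\lambda(\pi)=\chi_1^\ast(p)+\chi_2^\ast(p)=\chi_1(p)$ (since $\chi_2$ is ramified because $\omega_p$ is) gives $\chi_1(p)=a_p(f)/p^{(k-1)/2}$. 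In (2), with $r=1$ and $\omega_p$ unramified, a primitive principal series would force the conductor of $\omega_p$ to equal $p$ and a supercuspidal requires $r\ge 2$, so $\pi=\St\otimes\chi$ with $\chi$ unramified; then $\lambda(\pi)=p^{-1/2}\chi(p)$ yields the claimed formula.

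Part (3) is the heart of the matter. Under the hypotheses there, the analysis just given excludes both principal series and special representations among the primitive possibilities, so $\pi$ is forced to be supercuspidal (and hence $r\ge 2$). The one remaining ingredient is the bound on the conductor of $\omega_p$. I would obtain it from the explicit construction $\pi=\Ind_K^G\tau$ recalled in \S\ref{Generalitiescuspidaltypes}: the compact-mod-centre subgroup $K$ has finite index (1 or 2) in its normaliser in $G$, and the depth filtration on $K\cap\GL_2(\Z_p)$ restricts to the filtration $1+p^j\Z_p$ on the centre. Tracking how the depth of $\tau$ governs both the conductor of $\pi$ and the conductor of $\omega_\tau=\omega_p$ yields the inequality $\mathrm{cond}(\omega_p)\le\lfloor r/2\rfloor$; the factor of $2$ reflects the standard fact that $\tau$ being trivial on the level-$j$ congruence subgroup of $K$ makes $\pi$ have conductor essentially $2j$ while $\omega_p$ has conductor at most $j$.

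The main obstacle is this last bookkeeping for supercuspidals, since the detailed parametrisation of inducing data is not yet in place in the excerpt; in the final write-up I would either postpone the bound until after \S\ref{Generalitiescuspidaltypes} or cite it from standard references on the local Langlands correspondence for $\GL_2(\Q_p)$ (e.g.\ via the Artin conductor of the attached Weil representation, which is induced from an index-$2$ subgroup and whose determinant is $\omega_p$).
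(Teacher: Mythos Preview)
Your approach to (1) and (2) matches the paper's: both read these cases directly off Table~\ref{reptable} together with the relation $\lambda(\pi_{f,p})=a_p(f)/p^{(k-1)/2}$. (You have a labelling slip in (1): having declared $\chi_2$ unramified, you then write ``since $\chi_2$ is ramified''. With your convention it is $\chi_1$ that is ramified and $\lambda(\pi)=\chi_2^*(p)=\chi_2(p)$; either relabel to match the statement of the proposition, where $\chi_1$ is the unramified character, or correct the displayed identity.)

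For (3) the routes diverge. The paper does not derive the bound on the conductor of $\omega_p$ at all: it simply cites \cite[Theorem~4.3$'$]{AtkinLiPseudoEigenvalues}. You instead sketch a purely local argument through the cuspidal-type construction of \S\ref{Generalitiescuspidaltypes}, reading the bound off from how the centre sits inside the filtration subgroup $K_n$. Your sketch is correct and can be made precise once Proposition~\ref{nversusr} is in hand: in the unramified case $K_n=1+p^nM_2(\Z_p)$ meets the centre in $1+p^n\Z_p$ with $r=2n$, giving conductor $\le n=\floor{r/2}$; in the ramified case (where $n$ is even by Theorem~\ref{exhaustion}) one has $K_n=1+P^n=1+p^{n/2}A$, which meets the centre in $1+p^{n/2}\Z_p$, and $r=n+1$ gives $n/2=(r-1)/2=\floor{r/2}$. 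The trade-off is exactly the one you identify: your argument is self-contained within the local representation theory but creates a forward reference to \S\ref{Generalitiescuspidaltypes}, whereas the paper's citation to Atkin--Li keeps the proposition logically prior to that section at the cost of invoking an external result about newforms.
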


\begin{proof}
The only statement requiring proof is that a primitive supercuspidal representation of conductor $r$ must have central character of conductor $\le \floor{r/2}$; this is \cite[Theorem 4.3']{AtkinLiPseudoEigenvalues}. The remaining statements are clear from Table \ref{reptable} and the relation $\lambda(\pi_{f,p})=a_p(f)/p^{(k-1)/2}$.
\end{proof}

\begin{cor}
Suppose $\pi_{f, p}$ is not supercuspidal. Let $\chi$ be any character such that $f \otimes \chi$ is $p$-primitive. Then $\pi_{f,p}$ is determined up to isomorphism by $\eps$, $\chi$, $k$, and $a_p(f \otimes \chi)$.
\end{cor}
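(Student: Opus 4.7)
The plan is to reduce everything to the $p$-primitive case, where Proposition~\ref{supercuspidalcriterion} applies, and then to untwist.

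First I would observe that the operation $\pi \mapsto \pi \otimes (\psi \circ \det)$ preserves the three classes in Table~\ref{reptable}: twisting a principal series $\pi(\chi_1,\chi_2)$ gives $\pi(\chi_1 \psi, \chi_2 \psi)$, twisting a special $\St \otimes \chi_1$ gives $\St \otimes (\chi_1 \psi)$, and twisting a supercuspidal gives a supercuspidal. Let $\psi$ denote the id\`ele class character corresponding to $\chi$ and $\psi_p$ its $p$-component. Since $\pi_{f \otimes \chi, p} = \pi_{f,p} \otimes (\psi_p \circ \det)$ and $\pi_{f,p}$ is not supercuspidal by hypothesis, neither is $\pi_{f\otimes\chi, p}$.

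Next, because $f \otimes \chi$ is $p$-primitive, $\pi_{f\otimes\chi, p}$ is a primitive representation of $G$, so only cases (1) or (2) of Proposition~\ref{supercuspidalcriterion} can occur. The central character of the newform $f \otimes \chi$ is $\eps \cdot \chi^2$, so the $p$-component of its adelization is determined by $\eps$ and $\chi$; together with $k$ and the eigenvalue $a_p(f \otimes \chi)$, this pins down $\pi_{f \otimes \chi, p}$ up to isomorphism by Proposition~\ref{supercuspidalcriterion}: in case (1) both characters of the principal series are read off, and in case (2) the defining unramified character of the special representation is read off.

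Finally I would recover $\pi_{f,p}$ by untwisting, using
\[ \pi_{f, p} \isom \pi_{f\otimes\chi, p} \otimes (\psi_p^{-1} \circ \det). \]
Since $\psi_p$ depends only on $\chi$, the right-hand side is determined by $\eps$, $\chi$, $k$, and $a_p(f \otimes \chi)$, as required. There is no real obstacle here; the only thing to check carefully is that the class (principal series vs.\ special) of $\pi_{f,p}$ is preserved by the twist, which is immediate from the explicit descriptions above.
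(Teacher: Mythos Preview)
Your proof is correct and matches the paper's approach exactly; the paper in fact states this corollary without proof, as an immediate consequence of Proposition~\ref{supercuspidalcriterion} together with the discussion of twisting that precedes it. One minor completeness remark: Proposition~\ref{supercuspidalcriterion} is stated under the hypothesis $r \ge 1$, so strictly speaking you should also treat the possibility that $f \otimes \chi$ is unramified at $p$; but then $\pi_{f\otimes\chi,p}$ is an unramified principal series determined by its Satake parameters via $a_p(f\otimes\chi)$ and $\omega_p'(p)$, as in the paragraph immediately preceding the Proposition, and your untwisting step goes through unchanged.
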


Hence it is the supercuspidal cases that are most interesting.

\section{Supercuspidal Representations}

\subsection{Generalities on cuspidal types}
\label{Generalitiescuspidaltypes}

Let $\pi$ be a supercuspidal representation of $G=\GL_2(\Q_p)$. Then, by the induction theorem of \cite[\S 15.5]{BushnellHenniartLocalLanglands}, $\pi$ arises by induction from a finite-dimensional character of an (explicit) open subgroup $K\subset G$ which contains the center $Z$ of $G$ and for which $K/Z$ is compact.  The following definition is not standard but is well-suited for our purposes:

\begin{defn} A {\em cuspidal type} is a pair $(K,\tau)$, where $K$ is a maximal compact-mod-center subgroup of $G$ and $\tau$ is a finite-dimensional character of $K$ such that $\Ind_K^G\tau$ is an irreducible supercuspidal representation of $G$.
\end{defn}

Up to conjugacy, there are two maximal compact-mod-center subgroups of $G$: one is $\Q_p^\times\GL_2(\Q_p)$, and the other is the normalizer in $G$ of an Iwahori subgroup $I_0(p)=\tbt{\Z_p^\times}{\Z_p}{p\Z_p}{\Z_p^\times}\subset \GL_2(\Z_p)$. We will refer to these as the {\em unramified} and {\em ramified} maximal compact-mod-center subgroups, respectively.

In either case, $K$ comes along with a natural filtration by normal subgroups. Let $K_0\subset K$ be the maximal compact subgroup, so that $K_0$ equals $\GL_2(\Z_p)$ or $I_0(p)$. Then $K_0$ is the group of units in a $\Z_p$-algebra $A\subset M_2(\Z_p)$ equal to $M_2(\Z_p)$ in the unramified case and $\tbt{\Z_p}{\Z_p}{p\Z_p}{\Z_p}$ in the ramified case. The Jacobson radical $P$ of $A$ is $pA$ or $\tbt{p\Z_p}{\Z_p}{p\Z_p}{p\Z_p}$. Let $K_n=1+P^n$; then the $K_n$ are normal in $K$.

\begin{defn}
Let $(K,\tau)$ be a cuspidal type. The {\em level} of $(K,\tau)$ is the least integer $n\geq 0$ for which $\tau$ factors through $K/K_n$. The type $(K,\tau)$ is {\em primitive} if $\pi=\Ind_K^G\tau$ is a primitive representation of $G$.
\end{defn}

\begin{theorem} \label{exhaustion} Every irreducible supercuspidal representation of $G$ arises from a cuspidal type $(K,\tau)$. If $K$ is ramified, then one may take the level of $\tau$ to be even.
\end{theorem}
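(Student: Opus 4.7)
The first assertion is the exhaustion theorem for supercuspidal representations of $\GL_2(\Q_p)$, proved in \cite[\S 15.5]{BushnellHenniartLocalLanglands}. The plan is to cite it and sketch the Bushnell--Kutzko construction: given a supercuspidal $\pi$, one produces a simple stratum $[\mathfrak{A}, n, n-1, \beta]$ occurring in $\pi$, then a simple character $\theta$ on $H^1(\beta, \mathfrak{A})$, extends $\theta$ via a Heisenberg representation to $J^1(\beta, \mathfrak{A})$, and finally extends once more to a ``$\beta$-extension'' $\tau$ of a compact-mod-centre subgroup $K$. The intertwining set of $\theta$ coincides with $J^1$, so Mackey's criterion yields irreducibility of $\Ind_K^G \tau$, and Frobenius reciprocity identifies it with $\pi$. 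In $\GL_2(\Q_p)$ there are, up to $G$-conjugacy, exactly two hereditary $\Z_p$-orders in $M_2(\Q_p)$---the maximal order $M_2(\Z_p)$ and the Iwahori order $\tbt{\Z_p}{\Z_p}{p\Z_p}{\Z_p}$---giving respectively the unramified and ramified shapes of $K$ described in \S\ref{Generalitiescuspidaltypes}.

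For the parity assertion, fix a ramified cuspidal type $(K, \tau)$ with associated simple stratum $[\mathfrak{A}, n, n-1, \beta]$. Since the Iwahori order has period $e(\mathfrak{A}) = 2$, the relation $e(\mathfrak{A}) = e(\mathfrak{A}|E) \cdot e(E/\Q_p)$ with $E = \Q_p(\beta)$ shows that $E/\Q_p$ is quadratic. The equality $v_{\mathfrak{A}}(\beta) = e(\mathfrak{A}|E) \cdot v_E(\beta) = -n$ then forces $n$ even whenever $E/\Q_p$ is unramified, because in that case $e(\mathfrak{A}|E) = 2$. The only case requiring genuine work is therefore $E/\Q_p$ ramified quadratic with $n$ odd.

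In this remaining case $E^\times \subset K$, and a uniformiser $\varpi_E \in E$ lies in the non-identity coset $\Pi \cdot Z K_0$, where $\Pi = \tbt{0}{1}{p}{0}$. My plan is to replace $\beta$ by $\beta' = \varpi_E \cdot \beta \in E$, so that $v_E(\beta') = -n+1$ and hence $v_{\mathfrak{A}}(\beta') = -n+1$; one then builds a $\beta'$-extension $\tau'$ on $K$ of level $n-1$ and verifies that $\Ind_K^G \tau' \cong \pi$. The main obstacle is this last verification: the Heisenberg machinery has to be tracked carefully under the substitution $\beta \mapsto \varpi_E \beta$, and one must check that the new simple character has the same intertwining set in $G$ as the old, so that the induced representation is unchanged. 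I expect this to be a routine but technical exercise within the simple-character formalism of \cite[\S 13--15]{BushnellHenniartLocalLanglands}, and iterating if necessary produces a cuspidal type whose level is strictly less than $n$ and of the opposite parity, eventually bringing the level down to an even value.
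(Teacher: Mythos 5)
Your treatment of the first assertion is fine and matches the paper's approach: both cite Bushnell--Henniart's exhaustion theorem (the paper points to \S15.8 rather than \S15.5, but the content is the same), and your sketch of the simple-stratum/Heisenberg/$\beta$-extension machinery is a reasonable expansion of what the paper leaves implicit.

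The parity argument, however, is going the wrong way and contains a case that cannot occur. First, for $\GL_2(\Q_p)$ with $\mathfrak{A}$ the Iwahori order, $V=\Q_p^2$ is a one-dimensional $E$-vector space for any quadratic $E$ embedded in $M_2(\Q_p)$, so $e(\mathfrak{A}\vert E)=1$ always, and your own identity $e(\mathfrak{A})=e(\mathfrak{A}\vert E)\cdot e(E/\Q_p)$ then gives $e(E/\Q_p)=e(\mathfrak{A})=2$. The case ``$E/\Q_p$ unramified, $e(\mathfrak{A}\vert E)=2$'' that you use to dispose of the even-$n$ scenario simply never arises; when $K$ is ramified, $E$ is forced to be ramified quadratic. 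Second, and more importantly, you have the relationship between the stratum parameter $n$ and the level off by one: in the paper's conventions the level of any $\tau$ containing a stratum $(\mathfrak{A},n,\alpha)$ is $n+1$, not $n$. Bushnell--Henniart's definition of a ramified simple stratum in \S13.1 already requires $n$ to be odd (this is exactly the minimality condition for $\alpha\in E^\times$ with $E/\Q_p$ ramified quadratic, namely that $v_E(\alpha)$ be odd), so the level $n+1$ is even automatically. Nothing needs to be ``brought down.'' Your proposed substitution $\beta\mapsto\varpi_E\beta$ is therefore not only unnecessary but would actively break things: it destroys minimality of the stratum (the new valuation would be even), and you acknowledge that the verification $\Ind_K^G\tau'\isom\pi$ is left open. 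The paper's proof is a one-liner: cite the BH requirement that $n$ is odd, observe level $=n+1$, done.
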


\begin{proof}
For a proof that every irreducible supercuspidal representation arises from a cuspidal type, see~\cite{BushnellHenniartLocalLanglands}, \S15.8. There an exhaustive list of cuspidal types $(K,\tau)$ is given;  these cuspidal types are classified according to which ``simple strata'' they contain. For the definition of simple stratum, see~\cite{BushnellHenniartLocalLanglands}, \S13.1 and \S13.2. In the definition of ramified simple stratum $(A,n,\alpha)$ appearing in \S13.1, the integer $n$ is required to be odd;  in our notation, the level of any $\tau$ containing such a stratum has level $n+1$, which is therefore even.
\end{proof}

We remark that in~\cite{BCDT}, a concise list of pairs $(K,\tau)$ is given for which $\Ind_K^G\tau$ exhausts the set of isomorphism classes of supercuspidal representations, at least in the case of $p$ odd.

\begin{prop}
\label{nversusr} Let $(K,\tau)$ be a primitive cuspidal type of level $n$. Let $r$ be the conductor of $\Ind_K^G\tau$.  Then $r=2n$ if $K$ is unramified and $r=n+1$ if $K$ is ramified.
\end{prop}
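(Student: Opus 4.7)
The plan is to combine Casselman's characterization (Theorem~\ref{newvector}) with Mackey theory applied to $V=\Ind_K^G\tau$. For each $r\geq 0$, let $H_r\subset\GL_2(\Z_p)$ denote the subgroup of matrices $\tbt{a}{b}{c}{d}$ with $c\equiv 0\pmod{p^r}$, equipped with the character $\chi_r\from\tbt{a}{b}{c}{d}\mapsto\omega(d)$. Casselman's theorem identifies $r(\pi)$ as the least $r$ for which $\operatorname{Hom}_{H_r}(\chi_r,V)\neq 0$, and Mackey's formula yields
\[
\operatorname{Hom}_{H_r}(\chi_r,V)=\bigoplus_{g\in H_r\backslash G/K}\operatorname{Hom}_{H_r\cap gKg^{-1}}\bigl(\chi_r,\,g\cdot\tau\bigr),
\]
reducing the problem to enumerating double cosets and analysing each summand. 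Representatives for $H_r\backslash G/K$ can be read off from the Cartan decomposition of $G$ with respect to $\GL_2(\Z_p)$, augmented in the ramified case by the Iwahori uniformiser $\tbt{0}{1}{p}{0}$.

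The heart of the argument is the evaluation of the $g=1$ summand $\operatorname{Hom}_{H_r}(\chi_r,\tau|_{H_r})$ via filtration bookkeeping. Since $\tau$ is trivial on $K_n$ but not on $K_{n-1}$, this Hom is non-zero precisely when $\chi_r$ matches a character of the subquotient $K_{n-1}/K_n$ appearing in $\tau$. In the unramified case, $K_n=1+p^nM_2(\Z_p)$ is balanced in all four matrix entries, and the matching forces $r=2n$. In the ramified case, the period-$2$ hereditary-order filtration $K_n=1+P^n$ already carries an extra factor of $p$ in the $c$-entry at every step, so the matching is achieved earlier, at $r=n+1$; the evenness of $n$ guaranteed by Theorem~\ref{exhaustion} makes this value odd, as it must be. Primitivity of $\tau$ is then used to show that no off-diagonal double coset contributes for $r$ strictly below the threshold, for otherwise one could realise $\pi$ as the induction of a type of smaller level, contradicting primitivity.

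The main obstacle is precisely this off-diagonal vanishing, which requires delicate matrix computations inside the filtration of $K$ together with careful use of primitivity; it is here that the bulk of the technical work resides. A cleaner alternative would bypass the Mackey computation entirely via the local Langlands correspondence: the Galois parameter of $\pi$ is induced from a character of the Weil group of a quadratic extension $E/\Q_p$, unramified or ramified according to the conjugacy class of $K$, and the standard conductor formula for induced Weil-group representations immediately gives $r(\pi)=2n$ in the first case and $r(\pi)=n+1$ in the second --- though at the cost of invoking the full LLC.
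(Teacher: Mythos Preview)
The paper does not give a self-contained argument at all: its proof is a one-line citation to Henniart's appendix to Breuil--M\'ezard, \S\S A.3.2--A.3.8, where the conductor of an induced supercuspidal is computed from the data of the inducing type. Your Mackey/Casselman strategy is therefore a genuinely different route, and in principle a reasonable one, but the execution contains a real error.

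Your identification of the $g=1$ double coset as ``the heart'' is mistaken: for a cuspidal type $(K,\tau)$ the summand $\operatorname{Hom}_{H_r}(\chi_r,\tau|_{H_r})$ is zero for every $r$. Indeed, $\chi_r$ is trivial on the unipotent $N(\Z_p)\subset H_r$, so a nonzero homomorphism would produce an $N(\Z_p)$-fixed line in $\tau$; but cuspidality of the type forces $\tau$ to have no nonzero $N(\Z_p)$-invariants (for depth zero this is literally the definition of a cuspidal representation of $\GL_2(\F_p)$, and in higher depth it is part of the construction in \cite[\S\S 15--16]{BushnellHenniartLocalLanglands}). Consequently the new vector is supported on a \emph{non-identity} coset. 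In fact, as Theorem~\ref{Kmoduleisomorphism} of the present paper makes explicit, the translate $w=\pi\!\ptbt{p^{\floor{r/2}}}{0}{0}{1}\!v$ of the new vector lies in the canonical copy of $\tau$ inside $\pi$, so in the Mackey picture the relevant coset representative is $g=\tbt{p^{-\floor{r/2}}}{}{}{1}$, not $g=1$. Thus the roles you assign are reversed: the ``off-diagonal'' coset carries the new vector, and the genuine work is to show that (i) this particular $g$ first contributes at $r=2n$ (unramified) or $r=n+1$ (ramified), and (ii) no coset contributes for smaller $r$. Your heuristic about $K_{n-1}/K_n$ being ``balanced'' does not address either point.

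A smaller remark on your alternative: deducing the formula from the conductor of an induced Weil representation presupposes that $\pi$ is dihedral, which excludes the primitive (octahedral) supercuspidals at $p=2$, and it also requires identifying the level $n$ of $\tau$ with the level of the inducing character $\theta$ --- itself a nontrivial bookkeeping exercise that is part of what Henniart's appendix establishes.
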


\begin{proof}   The relationship between the level of $(K,\tau)$ and the conductor of $\Ind_K^G\tau$ is spelled out in Henniart's appendix to~\cite{BreuilMezard}, A3.2--A.3.8.
\end{proof}

\subsection{The restriction of a cuspidal representation to a compact-mod-center subgroup}

Suppose $f$ is a $p$-primitive cusp form for which one has determined (using Prop.~\ref{supercuspidalcriterion}, say) that the local component $\pi_{f,p}$ is supercuspidal. By Thm.~\ref{exhaustion}, there exists a cuspidal type $(K,\tau)$ for which $\pi_{f,p}=\Ind_K^G\tau$. The subgroup $K$ is determined (up to conjugacy in $\GL_2(\Z_p)$) by the parity of the exponent of $p$ in the conductor of $f$. Therefore to complete the solution to Problem.~\ref{mainprob} it suffices to compute a model for the $K$-module $\tau$. We shall see how to compute $\tau$ in Thm.~\ref{identifytypespace}. In preparation for this we shall need to gather some purely local results on supercuspidal representations of $\GL_2(\Q_p)$.

We will have use for an old result of Casselman, see Theorem 3 of~\cite{CasselmanTheRestriction} and its proof. For the moment, let $\Gamma(p^m)\subset\GL_2(\Z_p)$ denote the group of matrices congruent to the identity modulo $p^m$ (this agrees with $K_m$ if $K$ is unramified). For a ring $R$, let $N(R)\subset\GL_2(R)$ be the unipotent subgroup $\tbt{1}{R}{}{1}$.

\begin{theorem}
\label{casselman} Let $\pi$ be a primitive cuspidal representation of conductor $p^r$, and let $m=\floor{(r+1)/2}$. Then $\pi^{\Gamma(p^m)}$ is an irreducible representation of $\GL_2(\Z/p^m\Z)$.
Furthermore, the restriction of $\pi^{\Gamma(p^m)}$ to $N(\Z/p^m\Z)\isom \Z/p^m\Z$ is determined as follows:
\begin{enumerate}
\item If $r$ is even, then $\pi^{\Gamma(p^m)}\vert_{N(\Z/p^m\Z)}$ is the direct sum of characters of conductor $p^m$. Each of these $p^{m-1}(p-1)$ characters appears with multiplicity one.
\item If $r$ is odd, then $\pi^{\Gamma(p^m)}\vert_{N(\Z/p^m\Z)}$ is the direct sum of characters of conductors $p^{m-1}$ and $p^m$. Each of these $p^{m-2}(p^2-1)$ characters appears with multiplicity one.
\end{enumerate}
\end{theorem}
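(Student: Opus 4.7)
The plan is to work from the cuspidal type description $\pi \isom \Ind_K^G \tau$ given by Thm.~\ref{exhaustion} and Prop.~\ref{nversusr}, computing $\pi^{\Gamma(p^m)}$ by Mackey theory and then restricting to the unipotent subgroup. I would begin by verifying that the choice $m = \floor{(r+1)/2}$ is matched to the level $n$ of the type: when $K$ is unramified one has $r = 2n$ and $\Gamma(p^m) = K_n$ sits inside $K$; when $K$ is ramified, $n = r-1$ is even and the intersection $\Gamma(p^m) \cap K$ lies in the filtration subgroup on which $\tau$ is trivial.

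The heart of the argument is Mackey's decomposition: as a $\Gamma(p^m)$-representation,
\[
\Ind_K^G \tau \;\isom\; \bigoplus_{g \in \Gamma(p^m) \backslash G / K} \Ind_{\Gamma(p^m) \cap gKg^{-1}}^{\Gamma(p^m)} \tau^g,
\]
where $\tau^g(h) = \tau(g^{-1} h g)$. Taking $\Gamma(p^m)$-invariants, only those cosets on which $\tau^g$ restricts to the trivial character contribute. Using the explicit form of the filtration and the primitivity hypothesis, I would show that only the trivial coset survives, identifying $\pi^{\Gamma(p^m)}$ with the induction of $\tau$ from the image of $K \cap \GL_2(\Z_p)$ in $\GL_2(\Z/p^m\Z)$ up to all of $\GL_2(\Z/p^m\Z)$. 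Irreducibility as a $\GL_2(\Z/p^m\Z)$-module then follows from irreducibility of $\tau$ together with a standard Clifford-theoretic computation using Frobenius reciprocity.

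For the restriction to $N(\Z/p^m \Z)$, I would fix a level-one additive character $\psi$ of $\Q_p$ and parametrize the characters of $N(\Z/p^m\Z) \isom \Z/p^m\Z$ by $\alpha \in p^{-m}\Z_p / \Z_p$ via $\psi_\alpha\ptbt{1}{x}{}{1} = \psi(\alpha x)$; the conductor of $\psi_\alpha$ is $p^j$, where $-j$ is the $p$-adic valuation of $\alpha$. The multiplicity of $\psi_\alpha$ in $\pi^{\Gamma(p^m)}$ can be extracted either (i) by taking inner products with the character of $\tau$ and counting elements of $N(\Z_p) \cap K$ modulo $\Gamma(p^m)$ supporting $\psi_\alpha$, or (ii) by passing to the Kirillov model, in which $N(\Q_p)$ acts by $\psi$-twisted multiplication on $C_c^\infty(\Q_p^\times)$ and the $\Gamma(p^m)$-fixed subspace corresponds to functions supported in a controlled annulus. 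Either route produces the stratification of $N$-characters by conductor.

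The step I expect to be the main obstacle is the odd-conductor (ramified) case. There, characters of both conductor $p^{m-1}$ and $p^m$ appear, reflecting a splitting of the $N$-characters into two orbits under the conjugation action of the diagonal piece of $K \cap \GL_2(\Z_p)$; one must identify these orbits, verify multiplicity one on each, and cross-check that the total dimension sums to $p^{m-2}(p^2-1)$. The bookkeeping is best organized through the simple stratum $(A,n,\alpha)$ underlying $\tau$ as in \cite[\S 13]{BushnellHenniartLocalLanglands}, after which the even-conductor case falls out as the cleaner situation where only characters of full conductor $p^m$ appear and the count $p^{m-1}(p-1)$ matches the dimension of $\tau$ directly.
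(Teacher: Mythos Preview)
The paper does not supply its own proof: the result is quoted as Theorem~3 of \cite{CasselmanTheRestriction} and the reader is simply referred there. Casselman's original argument works in the Kirillov model of $\pi$ on $C_c^\infty(\Q_p^\times)$, where the $N(\Q_p)$-action is by multiplication by additive characters and the $\Gamma(p^m)$-fixed vectors are cut out by an explicit support condition; this is essentially your option~(ii), and it is the more self-contained route.

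Your type-theoretic route is a legitimate alternative, but the sketch has a gap in its ordering. First, to see the $\GL_2(\Z/p^m\Z)$-module structure you should apply Mackey to the restriction from $G$ to $\GL_2(\Z_p)$ rather than to $\Gamma(p^m)$; the double cosets are then represented (in the unramified case) by $g_j = \tbt{p^j}{}{}{1}$ for $j \ge 0$, and the claim ``only $j=0$ survives after taking $\Gamma(p^m)$-invariants'' unwinds to the vanishing of $\tau^{K \cap g_j^{-1}\Gamma(p^m)g_j}$ for $j \ge 1$. Since that intersection contains $\tbt{1}{p^{m-j}\Z_p}{0}{1}$, this vanishing is exactly the assertion that every character of $N(\Z_p)$ occurring in $\tau$ has conductor $p^m$, which is part~(1) of the conclusion you are trying to reach. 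The argument is therefore not circular, but the dependency runs opposite to how you have presented it: you must first extract the $N$-type of $\tau$ directly from its Bushnell--Henniart construction (via the simple stratum underlying it), and only then use this both to kill the nontrivial Mackey summands and to read off the $N$-restriction of $\pi^{\Gamma(p^m)}$. The ramified case has the same shape, with an additional Mackey-criterion check that the induction of $\tau$ from the image of $I_0(p)$ up to $\GL_2(\Z/p^m\Z)$ remains irreducible.
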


Let $(K,\tau)$ be a primitive cuspidal type, and let $\pi=\Ind_K^G\tau$. We will use Casselman's theorem to construct a model of $\tau$ inside of $\pi$.

\begin{theorem}
\label{Kmoduleisomorphism}
Let $p^r$ be the conductor of $\pi$, let $v$ be a new vector for $\pi$, and let $w$ be the vector
\[ w=\pi\ptbt{p^{\floor{r/2}}}{0}{0}{1} v. \]
Finally, let $W$ be the span of the $N(\Z_p)$-translates of $w$.
\begin{enumerate}
\item We have $W=\pi^{K_n}$. In particular, since $K_n\subset K$ is normal, we have that $W$ is stabilized by the action of $K$.
\item The action of $K$ on $W$ is irreducible.
\item There is an isomorphism of $K$-modules $W\approx \tau$.
\end{enumerate}
\end{theorem}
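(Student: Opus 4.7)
My plan is to verify (1) by showing both inclusions---the first by a direct conjugation computation, the second by a dimension count from the Kirillov model---and then deduce (2) and (3) by combining Casselman's theorem with an easy torus-transitivity argument.

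For the inclusion $W \subseteq \pi^{K_n}$, set $t = \tbt{p^{\floor{r/2}}}{0}{0}{1}$ and write $\pi(g)w = \pi(t)\pi(t^{-1}gt)v$ for $g \in K_n$, so the task reduces to showing $t^{-1}K_nt$ fixes $v$ up to the central character. A direct matrix computation, using $r = 2n$ in the unramified case and $r = n+1$ with $n$ even in the ramified case (Prop.~\ref{nversusr} and Thm.~\ref{exhaustion}), shows that $t^{-1}K_nt \subseteq \{g \in \GL_2(\Z_p) : g \equiv \tbt{*}{*}{0}{*}\pmod{p^r}\}$, and the central-character factor vanishes because by Prop.~\ref{supercuspidalcriterion} the conductor of $\omega$ is at most $\floor{r/2}$. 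Since $K_n \triangleleft K$ and $N(\Z_p) \subseteq K$, this gives $W \subseteq \pi^{K_n}$.

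For the equality $W = \pi^{K_n}$, compute $\dim W$ in the Kirillov model of $\pi$: the new vector $v$ is (up to normalisation) the characteristic function of $\Z_p^\times$, so $w$ is the characteristic function of $p^{-\floor{r/2}}\Z_p^\times$, and $N(\Z_p)$ acts by multiplication by the additive characters $x \mapsto \psi(ux)$. Fourier analysis on $\Z_p^\times/(1+p^{\floor{r/2}}\Z_p)$ then identifies $W$ as the direct sum of the $\phi(p^{\floor{r/2}})$ eigenlines for $N(\Z_p)$-characters of conductor exactly $p^{\floor{r/2}}$. On the other hand, Thm.~\ref{casselman} bounds $\dim \pi^{K_n} \leq \phi(p^{\floor{r/2}})$: in the unramified case $K_n = \Gamma(p^m)$ so this is equality by Thm.~\ref{casselman}(1), while in the ramified case $\Gamma(p^m) \subsetneq K_n$ forces the $N$-eigencharacters on $\pi^{K_n}$ to factor through $N(\Z_p)/N(p^{n/2}\Z_p)$, so Thm.~\ref{casselman}(2) caps the dimension at $\phi(p^{n/2})$. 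Combined with the inclusion, $W = \pi^{K_n}$ in both cases.

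For (2), observe that any $K$-subrepresentation of $W$ is stable under both $N(\Z_p)$ and the diagonal torus $T(\Z_p) = \Z_p^\times \times \Z_p^\times \subseteq K$, and hence corresponds to a $T(\Z_p)$-stable subset of the $N(\Z_p)$-eigencharacters of conductor $p^{\floor{r/2}}$. Conjugation by $\tbt{a}{0}{0}{d} \in T(\Z_p)$ sends the eigencharacter $\psi$ to $x \mapsto \psi((d/a)x)$, and multiplication by units acts transitively on primitive characters of $\Z/p^{\floor{r/2}}\Z$, so the only $T(\Z_p)$-stable subsets are $\emptyset$ and the full set; hence $W$ is $K$-irreducible. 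For (3), the canonical $K$-equivariant embedding $\iota\colon\tau \injects \Ind_K^G\tau = \pi$ (sending $v \in \tau$ to the function supported on $K$ with value $\tau(\cdot)v$) has image inside $\pi^{K_n} = W$ because $\tau$ factors through $K/K_n$; since $\iota(\tau)$ is nonzero and $W$ is $K$-irreducible by (2), we conclude $\iota(\tau) = W$ and $W \cong \tau$ as $K$-modules. The main obstacle lies in step~2, specifically verifying that the new vector corresponds to $\chi_{\Z_p^\times}$ in the Kirillov model and carrying out the Fourier decomposition of $w$ carefully; the ramified case is additionally subtle because $\pi^{K_n}$ is a proper subspace of $\pi^{\Gamma(p^m)}$, so the upper bound on $\dim \pi^{K_n}$ must be extracted from Casselman's theorem by an indirect conductor argument rather than read off directly.
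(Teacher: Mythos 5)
Your proof is correct and follows the same overall skeleton as the paper---establish $W\subseteq\pi^{K_n}$ by conjugating $K_n$ through $\beta=\tbt{p^{\floor{r/2}}}{0}{0}{1}$, prove $W=\pi^{K_n}$, prove $K$-irreducibility by torus-transitivity on the $N$-eigencharacters, and deduce (3) by Frobenius reciprocity and irreducibility. The one place you genuinely diverge is the equality $W=\pi^{K_n}$. The paper stays entirely within Casselman's theorem: since $W$ is $B(\Z_p)$-stable and its $N$-eigencharacters form a subset of those appearing in $\pi^{K_n}$ (all of which have the fixed conductor $p^{\floor{r/2}}$, by Thm.~\ref{casselman} plus, in the ramified case, the observation that $K_n$ contains $N(p^{m-1}\Z_p)$), $B$-transitivity forces $W$ to contain \emph{all} of them, and multiplicity-one then gives equality. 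You instead import the Kirillov model to compute $\dim W = \phi(p^{\floor{r/2}})$ directly (new vector $= \chi_{\Z_p^\times}$, so $w=\chi_{p^{-\floor{r/2}}\Z_p^\times}$, and the $N(\Z_p)$-translates span the functions on $p^{-\floor{r/2}}\Z_p^\times$), and then pair this with the upper bound $\dim\pi^{K_n}\leq\phi(p^{\floor{r/2}})$ extracted from Casselman. Your route is more explicit and simultaneously delivers the full $N$-eigendecomposition of $W$ ``for free,'' at the cost of invoking the Kirillov-model description of the new vector, which is an additional ingredient the paper avoids; the paper's version is marginally more self-contained but conceals the explicit structure. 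Both handle the ramified subtlety ($\pi^{K_n}\subsetneq\pi^{\Gamma(p^m)}$) the same way, by cutting down conductors using $N(p^{m-1}\Z_p)\subset K_n$. No gaps.
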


\begin{proof}
First observe that (1) and (2) implies (3). Indeed, since $\pi$ is induced from $\tau$, and $\tau$ is fixed by $K_n$, we see that $\pi^{K_n}$ contains a $K$-module isomorphic to $\tau$. But by (1) and (2), $\pi^{K_n}=W$ is already an irreducible $K$-module, which forces $W\approx\tau$. Therefore it suffices to prove (1) and (2).

First suppose $r=2n$ is even, so that $K=\Q_p^\times{\GL_2(\Z_p)}$ is unramified. Then $K_n=\Gamma(p^n)$. The vector $w$ has the property that
\[ \pi\ptbt{a}{p^nb}{p^nc}{d} w = \omega(a)w, \]
where $a,d\in\Z_p^\times$, $b,c\in \Z_p$, and where $\omega$ is the central character of $\pi$. It immediately follows that $W\subset \pi^{K_n}$. Let $B(\Z_p)\subset \GL_2(\Z_p)$ be the subgroup of upper-triangular matrices. Since $B(\Z_p)$ preserves $\C w$ and normalizes $N(\Z_p)$ we see that $B(\Z_p)$ preserves $W$. Consider the decomposition of $W$ into irreducible characters of $N(\Z_p)$:  certainly each character appearing has conductor $\leq p^n$. On the other hand, by Thm.~\ref{casselman}, part (1), $\pi^{K_n}\vert_{N(\Z_p)}$ contains exactly those characters of conductor $p^n$. Therefore $W$ only contains characters of conductor $p^n$. But $B(\Z_p)$ preserves $W$:  as a result, {\em every} character of conductor $p^n$ occurs in $W$. In view of Thm.~\ref{casselman}, we have shown that $W$ is all of $\pi^{K_n}$. Furthermore, the action of $B(\Z_p)$ on $W$ is irreducible (since it acts transitively on the set of characters of $N(\Z_p)$ inside $W$). Hence the action of $K$ on $W$ is also irreducible, as required.

Now suppose that $K$ is ramified, so that $r$ is odd:  Write $r=2m-1$ and $n=2m-2$. We have that
\[ \pi\ptbt{a}{p^{m-1}b}{p^mc}{d} w = \omega(a)w, \]
for all $a,d\in\Z_p^\times$, $b,c\in\Z_p$. The central character $\omega$ has conductor $\leq p^{m-1}$. The group $K_n$ is
\[ K_n = \tbt{1+p^{m-1}\Z_p}{p^{m-1}\Z_p}{p^{m}\Z_p}{1+p^{m-1}\Z_p}\supset \Gamma(p^m),\]
from which we see that $W\subset \pi^{K_n}\subset \pi^{\Gamma(p^m)}$. Applying Thm.~\ref{casselman} shows that $\pi^{\Gamma(p^m)}$ is an irreducible representation of $\GL_2(\Z/p^m\Z)$ containing exactly those characters of $N(\Z_p)$ of conductors $p^{m-1}$ and $p^{m}$. Consider the restriction of $W$ to $N(\Z_p)$:  each character that appears must have conductor $p^{m-1}$. Since $W$ is preserved by $B(\Z_p)$, we see that $W\vert_{N(\Z_p)}$ is the sum of exactly those characters of conductor $p^{m-1}$. Since no characters of conductor $p^m$ could occur in $\pi^{K_n}\vert_{N(\Z_p)}$, we have that $W=\pi^{K_n}$. The action of $K$ on $W$ is again irreducible, for identical reasoning as in the previous paragraph.
\end{proof}

Let $U_p\subset \GL_2(\Z_p)$ be the subgroup of matrices $\tbt{a}{b}{c}{d}$ with $c\equiv 0\pmod{p^r}$ and $a,d\equiv 1\pmod{p^{\floor{r/2}}}$.  Observe that
\begin{equation}
\label{UpKn}
U_p =  \tbt{p^{\floor{r/2}}}{}{}{1}^{-1} K_n \tbt{p^{\floor{r/2}}}{}{}{1}.
\end{equation}
Therefore if $\pi$ is a primitive supercuspidal representation of conductor $r$, then $\pi^{U_p}$ is stabilized by the group
\begin{equation}
\label{Kprime}
K' = \tbt{p^{\floor{r/2}}}{}{}{1}^{-1} K \tbt{p^{\floor{r/2}}}{}{}{1}.
\end{equation}
Note that $K'$ contains the subgroup of matrices $\tbt{a}{b}{c}{d}\in\GL_2(\Z_p)$ with $c\equiv 0\pmod{p^r}$.
We will need the following corollary to Thm.~\ref{Kmoduleisomorphism}.

\begin{cor} \label{Kmodulecor} Let $\rho\from K'\to \Aut V$ be a continuous linear representation of $K'$ which is isomorphic to a direct sum of finitely many copies of $\pi^{U_p}$.  Let $v\in V$ be a nonzero vector with the property that
\[\rho\ptbt{a}{b}{c}{d}v = \omega(d)v\]
for all $\tbt{a}{b}{c}{d}\in\GL_2(\Z_p)$ with $c\equiv 0\pmod{p^r}$.
Let $X$ be the span of the translates of $v$ under $\tbt{1}{p^{-\floor{r/2}}\Z_p}{}{1}$.  Then $X$ is preserved by $K'$, and $(K',X)$ is a cuspidal pair with $\Ind_{K'}^G X \isom \pi$.
\end{cor}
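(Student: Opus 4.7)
The plan is to translate Theorem~\ref{Kmoduleisomorphism} through the element $t := \tbt{p^{\floor{r/2}}}{0}{0}{1}$ and then to use uniqueness of the new vector to reduce the ``direct sum of $m$ copies'' setting in $V$ to a single copy of $\pi^{U_p}$. First, by (\ref{UpKn}) and (\ref{Kprime}) we have $U_p = t^{-1}K_nt$ and $K' = t^{-1}Kt$, so the operator $\pi(t)$ identifies $\pi^{U_p}$ (with its $K'$-action) with $\pi^{K_n}$ (with its $K$-action), equivariantly for the conjugation isomorphism $k'\mapsto tk't^{-1}\from K' \xrightarrow{\sim} K$. Combined with Theorem~\ref{Kmoduleisomorphism}, this realises $\pi^{U_p}$ as an irreducible $K'$-module isomorphic to the conjugate representation $\tau^t$, and shows that the new vector $v_0 \in \pi^{U_p}$ (the preimage of $w$ under $\pi(t)$) has the property that its $\tbt{1}{p^{-\floor{r/2}}\Z_p}{}{1}$-translates, which correspond to the $N(\Z_p)$-translates of $w$, span all of $\pi^{U_p}$. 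By Theorem~\ref{newvector} (together with the bound on the conductor of $\omega$ coming from Proposition~\ref{supercuspidalcriterion}(3), which is what ensures $v_0 \in \pi^{U_p}$ in the first place), $v_0$ is up to scalar the only vector of $\pi^{U_p}$ transforming by the character $\tbt{a}{b}{c}{d}\mapsto \omega(d)$ of $K_0(p^r)$.

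Next I would fix a $K'$-equivariant decomposition $V = \bigoplus_{i=1}^m V_i$ with isomorphisms $\phi_i\from \pi^{U_p} \xrightarrow{\sim} V_i$. The hypothesis on $v$, applied inside each summand, forces $v = \sum_i c_i \phi_i(v_0)$ for scalars $c_i$ (not all zero, since $v\ne 0$). The linear map $\phi := \sum_i c_i \phi_i \from \pi^{U_p}\to V$ is $K'$-equivariant with $\phi(v_0)=v$, and since $\tbt{1}{p^{-\floor{r/2}}\Z_p}{}{1}\subset K'$, the subspace $X$ equals the $\phi$-image of the span of the $\tbt{1}{p^{-\floor{r/2}}\Z_p}{}{1}$-translates of $v_0$ in $\pi^{U_p}$, which by the previous paragraph is all of $\pi^{U_p}$. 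Since $\phi$ is nonzero and $\pi^{U_p}$ is irreducible, $\phi$ is injective, so $X = \phi(\pi^{U_p})$ is a $K'$-submodule of $V$ isomorphic to $\tau^t$. The standard conjugation invariance of induction then gives $\Ind_{K'}^G X = \Ind_{t^{-1}Kt}^G \tau^t \cong \Ind_K^G\tau = \pi$, and in particular $(K',X)$ is a cuspidal type.

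The only real difficulty is careful bookkeeping of the conjugations; structurally, the key point is the one-dimensionality of the new-vector line provided by Theorem~\ref{newvector}, which is what forces $X$ to be a single ``diagonal'' copy of $\pi^{U_p}$ inside $V$ rather than an $m$-fold sum.
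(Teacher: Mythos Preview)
Your argument is correct and follows essentially the same route as the paper: decompose $V$ as a direct sum of copies of $\pi^{U_p}$, use uniqueness of the new vector (Theorem~\ref{newvector}) to see that $v$ lies in a single diagonal copy, and then invoke Theorem~\ref{Kmoduleisomorphism} (transported through conjugation by $t$). The paper's proof is terser, leaving the conjugation bookkeeping and the induction-invariance step $\Ind_{K'}^G \tau^t \cong \Ind_K^G \tau$ implicit, but the structure is identical.
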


\begin{proof}  Write $\rho=(\pi^{U_p})^{\oplus\mu}$ and suppose $v=(v_i)_{1\leq i\leq \mu}$, where $v_i$ is a vector in the space of $\pi^{U_p}$.  Then by the hypothesis on $v$ we see that $v_i$ is a new vector for $\pi$.  By uniqueness of the new vector up to scaling we must have $v=(a_iv_0)_{1\leq i\leq \mu}$ for a particular new vector $v_0$ and a nonzero tuple $(a_i)\in\C^\mu$.  Thus $v$ is contained in $\set{(a_ix)\vert x\in \pi^{U_p}}\subset V$ which is itself a model for $\pi^{U_p}$.  The result now follows from Thm.\ref{Kmoduleisomorphism}.
\end{proof}

\subsection{Admissible pairs}

If $p$ is odd, then there is an explicit parametrisation of the primitive supercuspidal types.

\begin{defn}[{\cite[\S 18]{BushnellHenniartLocalLanglands}}]
\label{admissiblepair}
 An \textit{admissible pair} is a pair $(E, \theta)$, where $E / \Q_p$ is a tamely ramified extension and $\theta$ is a character $E^\times \to \overline{\Q}^\times$.  These must satisfy the conditions:
\begin{enumerate}
\item $\chi$ does not factor through the norm map $\N_{E/\Q_p}\from E^\times\to \Q_p^\times$, and
\item If $E/\Q_p$ is ramified, then the restriction $\chi\vert U^1_E$ must not factor through $\N_{E/\Q_p}$.
\end{enumerate}
We say that $(E, \theta)$ and $(E', \theta')$ are equivalent if there is an isomorphism $\iota: E \to E'$ such that $\theta' \circ \iota = \theta$.
\end{defn}

In \cite[\S 20]{BushnellHenniartLocalLanglands}, a construction is given which associates to each admissible pair $(E, \theta)$ a cuspidal type $(K, \tau)$, where $K$ is ramified or unramified as $E$ is. If $\chi$ is a character of $\Q_p^\times$, then $(E, \theta \times (\chi \circ \N_{E/\Q_p}))$ corresponds to $(K, \tau \otimes (\chi \circ \det))$; hence $(K, \tau)$ is primitive if and only if for all characters $\chi$ of $\Q_p^\times$, we have $c(\theta\times \chi\circ\N_{E/\Q_p})\geq c(\theta)$.

For $p \ne 2$ this construction gives a bijection between admissible pairs (up to equivalence) and cuspidal types (up to isomorphism); if $p = 2$ then the admissible pairs correspond to a subset of the cuspidal types. We will need the following formula describing the trace of the representation $(K, \tau)$:

\begin{defn} An element $\alpha\in E^\times$ is minimal if:
\begin{enumerate}
\item $E/\Q_p$ is unramified and $\alpha=p^k\alpha_0$ for a unit $\alpha_0\in \OO_E^\times$ whose image in $\OO_E/\gp_E\isom\mathbf{F}_{p^2}$ does not lie in $\mathbf{F}_p$.
\item $E/\Q_p$ is ramified and $\alpha$ has odd valuation.
\end{enumerate}
\end{defn}

\begin{theorem} \label{charcalc} Let $(K,\tau)$ be a primitive cuspidal type of level $n$ corresponding to an admissible pair $(E, \theta)$, and let $\alpha\in E^\times$ be a minimal element.  Let $s\in\Gal(E/\Q_p)$ be the nontrivial automorphism.  Then
\[ \tr \tau(\alpha)=\iota\left(\theta(\alpha)+\theta(\alpha^s)\right),\]
where $\iota=(-1)^{(n-1)}$ if $E/\Q_p$ is unramified and $\iota=1$ otherwise.
\end{theorem}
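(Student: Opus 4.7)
The plan is to compute $\tr\tau(\alpha)$ via the explicit construction of $(K,\tau)$ from an admissible pair $(E,\theta)$ given in Bushnell--Henniart \cite[\S19--20]{BushnellHenniartLocalLanglands}. In every case, $\tau$ is built in the form $\Ind_J^K \Lambda$, where $E^\times \subseteq J \subseteq K$ and $\Lambda$ extends (a twist of) $\theta$ on $E^\times$, possibly through an auxiliary Heisenberg representation on a quotient of successive unit filtrations $K_m/K_{m+1}$. I would handle four subcases separately: (i) $E/\Q_p$ unramified and tame ($n = 1$); (ii) $E/\Q_p$ unramified and wild ($n\geq 2$); (iii) $E/\Q_p$ ramified at minimal level ($n = 2$); (iv) $E/\Q_p$ ramified with $n\geq 4$ even, permitted by Theorem~\ref{exhaustion}.

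The first step is to apply the Frobenius character formula for the finite-index induction:
\[ \tr\tau(\alpha) = \sum_{\substack{[x]\in K/J\\ x\alpha x^{-1}\in J}} \tr\Lambda(x\alpha x^{-1}). \]
Minimality of $\alpha$ forces it to generate $E$ over $\Q_p$, so its centraliser in $K$ is $E^\times$ and any $x$ conjugating $\alpha$ into $J$ must normalise $E^\times$. The quotient $N_K(E^\times)/E^\times$ is cyclic of order two, represented by an explicit element $\varphi$ (a Weyl element in the unramified case, an Atkin--Lehner-type involution in the ramified case) whose conjugation realises the nontrivial $s\in\Gal(E/\Q_p)$. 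Hence the sum collapses to $\tr\Lambda(\alpha) + \tr\Lambda(\alpha^s)$.

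The second and harder step is to compute $\tr\Lambda(\alpha)$ itself. In subcase (i), $\Lambda = \tau$ is inflated to $K$ from the Deligne--Lusztig cuspidal representation of $\GL_2(\F_p)$ attached to the reduction of $\theta|_{\OO_E^\times}$, extended to the centre via $\theta|_{\Q_p^\times}$, and its value at the regular elliptic image of $\bar\alpha_0\in\F_{p^2}^\times$ is $\theta(\alpha)$ up to the sign coming from the Green character formula. In subcases (ii)--(iv), $\Lambda$ is built by iterating Heisenberg-type extensions above a character extending $\theta$, and the character value at a minimal $\alpha$ is read off from the explicit formulae in Bushnell--Henniart \S15--19 (or equivalently from Henniart's appendix to \cite{BreuilMezard}). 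In each case $\tr\Lambda(\alpha) = \iota\,\theta(\alpha)$ for a cumulative sign $\iota$ produced by the number of Heisenberg steps (each contributing a Gauss-sum sign) together with the Deligne--Lusztig sign at the tame base layer (present only in the unramified case). Tallying yields $\iota = (-1)^{n-1}$ in the unramified case, while in the ramified case the parity of $n$ forces the signs to cancel and leaves $\iota = 1$.

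The main obstacle is the careful bookkeeping of these signs. Each Heisenberg extension contributes a $\pm 1$ governed by a quadratic Gauss sum on a symplectic $\F_p$-vector space arising from $K_m/K_{m+1}$, and the combinatorics of the product depends both on the parity of the level and on whether $E$ is ramified or unramified; verifying the parity pattern is the only non-mechanical part of the argument. Once the signs are correctly tracked, combining the collapsed Frobenius formula with the computation of $\tr\Lambda$ produces the stated identity
\[ \tr\tau(\alpha) = \iota\bigl(\theta(\alpha) + \theta(\alpha^s)\bigr). \]
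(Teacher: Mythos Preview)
The paper does not prove this itself; its entire proof is a pointer to \cite{WeinsteinHMFs}, Lemma~2.2. Your outline---express $\tau=\Ind_J^K\Lambda$ via the Bushnell--Henniart construction, collapse the induced-character sum over $K/J$ using minimality of $\alpha$, then evaluate $\tr\Lambda(\alpha)$ and track the accumulated Heisenberg and Deligne--Lusztig signs---is the standard route and is essentially what the cited lemma does.

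Two places in the sketch need attention. First, in subcase~(i) (unramified, $n=1$) there is no induction: $J=K$ and $\Lambda=\tau$, so the Frobenius sum has a single term, not two. The two summands $\theta(\alpha)$ and $\theta(\alpha^s)$ then appear \emph{together}, with a sign, via Green's formula for the character of the cuspidal representation of $\GL_2(\F_p)$ at a regular elliptic element; your phrase ``its value \dots\ is $\theta(\alpha)$ up to the sign'' misreads that formula. Second, the implication ``$x\alpha x^{-1}\in J \Rightarrow x\in N_K(E^\times)J$'' does not follow merely from the fact that the centraliser of $\alpha$ is $E^\times$, since $J$ strictly contains $E^\times$; one must invoke the explicit shape of $J$ (its image in the appropriate reductive quotient is the torus $\overline{E^\times}$) together with the regularity of the reduction of $\alpha$ there. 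With those two points repaired, the remainder is exactly the sign bookkeeping you identify as the crux.
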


\begin{proof} See \cite{WeinsteinHMFs}, proof of Lemma 2.2. \end{proof}

\begin{rmk}  Suppose $(E,\theta)$ is an admissible pair, and let $\pi_\theta=\Ind_K^G\tau$ be the corresponding supercuspidal representation of $\GL_2(\Q_p)$.  By local class field theory one can view $\theta$ as a character of the Weil group $W_E$.  Let $\sigma_\theta=\Ind_{E/\Q_p} \theta$;  the conditions on $\theta$ in definition~\ref{admissiblepair} ensure that $\sigma$ is an irreducible representation of $W_{\Q_p}$.

It is {\em{not}} the case that $\pi_\theta$ and $\sigma_\theta$ correspond under the local Langlands correspondence.  Rather, there is a character $\Delta_\theta$ of $E^\times$ of order dividing 4 for which $\pi_{\theta\Delta_\theta}$ and $\sigma_\theta$ correspond.  If $E/\Q_p$ is unramified then $\Delta_\theta$ is the unramified quadratic character.  If $E/\Q_p$ is ramified then $\Delta_\theta$ is a normalized Gauss sum, c.f.~\cite{BushnellHenniartLocalLanglands}, \S 34.4.
\end{rmk}

\section{Modular symbols}

\subsection{Definitions}
\label{modsymbdefinitions}

For $k \ge 2 \in \Z$ and $\Gamma$ a finite-index subgroup of $\SL_2(\Z)$, we let $\bS_k(\Gamma, \Q)$ denote the space of cuspidal modular symbols of level $\Gamma$ over $\Q$.  For the definition of this space, see \cite[\S 8.1]{SteinModularForms}. For our purposes, it suffices to state that:
\begin{enumerate}
 \item $\bS_k(\Gamma, \Q)$ is a finite-dimensional $\Q$-vector space, equipped with an action of the Hecke algebra $\mathcal{H}=\mathcal{H}(\GL_2^+(\Q) // \Gamma)$;
 \item Given an algorithm to determine whether or not a given $g \in \SL_2(\Z)$ lies in $\Gamma$, there are algorithms that will calculate a basis for $\bS_k(\Gamma, \Q)$ and the matrix of the Hecke operator $[\Gamma x \Gamma]$ on $\bS_k(\Gamma,\Q)$ for any given $x \in \GL_2^+(\Q)$;
 \item There is a perfect pairing between $\bS_k(\Gamma) = \bS_k(\Gamma, \Q) \otimes_\Q \C$ and $S_k(\Gamma) \oplus \overline{S_k(\Gamma)}$, with respect to which the action of $\mathcal{H}$ is self-adjoint;
 \item If a subgroup $\Gamma'\subset\SL_2(\Q)$ normalizes $\Gamma$, there is an action $\Gamma'\to\GL(\bS_k(\Gamma,\Q))$;  this action is denoted $(\gamma,\sigma)\mapsto\gamma\circ\sigma$.
\end{enumerate}

The subgroups $\Gamma$ we shall consider will all be of the form
\[\Gamma_H(N) = \left\{\tbt a b c d\ \middle|\ c = 0 \pmod{N}, d \in H\right\},\]
where $H$ is some subgroup of $(\Z / N\Z)^\times$. A highly optimised implementation\footnote{This implementation is the work of many authors, including William Stein, Jordi Quer, Craig Citro and the first author of this paper.}  of modular symbol algorithms for these level groups is included in the software package \cite{Sage}.

If $f$ is a new eigenform of level $N$, and the character $\eps$ of $f$ is trivial on the subgroup $H$, then $f \in S_k(\Gamma_H(N))$, and hence there is a two-dimensional eigenspace $M_f\subset\bS_k(\Gamma_H(N))$ with the same eigenvalues as $f$ for the Hecke operators $T_m$. This space is preserved by the Hecke operator corresponding to $\tbt 1 0 0 {-1}$, which has eigenvalues $\pm 1$ on $M_f$. Thus there are corresponding basis vectors $\sigma^+_f$, $\sigma^-_f$ of $M_f$, each unique up to scaling. We refer to these as the \textit{plus eigensymbol} and \textit{minus eigensymbol} for $f$.

\subsection{Determination of the minimal twist}

Let us now suppose $f$ is a new eigenform of level $Np^r$, with $p \nmid N$, and character $\eps$. Write $\eps = \eps_N \eps_p$ as a product of characters of $(\Z / N\Z)^\times$ and $(\Z / p^r \Z)^\times$.

\begin{prop}\label{twisting} Let the conductor of $\eps_p$ be $p^c$, and let $u = \min(\lfloor r/2 \rfloor, r - c)$.
\begin{enumerate}
\item If $\chi$ is a Dirichlet character of conductor $p^v$ with $v > u$, then $f_\chi$ is new of level $LCM(Np^r, Np^{c + v}, Np^{2v}) > Np^r$.
\item If $\chi$ is a character modulo $p^u$, then $f_\chi=f\otimes\chi$ has level $Np^r$ and is new at the primes $\ell \mid N$. Moreover, the span of the twists $f_\chi$ is equal to the span of the translates of $f$ by $\tbt 1 {p^{-u}\Z} 0 1$.
\end{enumerate}
\end{prop}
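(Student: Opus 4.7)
The plan is to translate the level statements about $f_\chi$ and $f\otimes\chi$ into conductor computations for the local representation $\pi_{f,p}\otimes\chi_p$, where $\chi_p$ is the $p$-component of the idele class character attached to $\chi$. Since $\chi$ has $p$-power conductor, the local twists at primes $\ell\ne p$ are unramified and preserve conductors; hence the level of $f\otimes\chi$ differs from $Np^r$ only in its $p$-part, which equals $p^{c(\pi_{f,p}\otimes\chi_p)}$.

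The crucial local ingredient is the following conductor formula: for any smooth irreducible infinite-dimensional representation $\pi$ of $G$ of conductor $p^r$ with central character of conductor $p^c$, and any character $\chi$ of $\Q_p^\times$ of conductor $p^v$,
\[
c(\pi\otimes\chi) \;=\; \begin{cases} r & \text{if } v\le u,\\ \max(r,\,c+v,\,2v) & \text{if } v>u,\end{cases}
\]
where $u=\min(\lfloor r/2\rfloor,\,r-c)$. I would verify this case-by-case via Table~\ref{reptable}. For a principal series $\pi(\chi_1,\chi_2)$ the identity $c(\pi\otimes\chi)=c(\chi_1\chi)+c(\chi_2\chi)$ reduces the claim to a computation of conductors of twists of characters. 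The special case is analogous. The supercuspidal case is the subtlest: one combines the bound $c\le\lfloor r/2\rfloor$ from Prop.~\ref{supercuspidalcriterion}(3) with the relationship between the level of a cuspidal type and the conductor of its induction (Prop.~\ref{nversusr}) to see that twisting by a low-conductor $\chi$ preserves the cuspidal type, while larger-conductor $\chi$ produce larger conductors via the stratum calculus. The equality for $v\le u$ is strengthened from $\le$ by the symmetry $\pi=(\pi\otimes\chi)\otimes\chi^{-1}$, using that $\chi^{-1}$ has the same conductor.

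Granting this formula, part (1) is immediate: for $v>u$ the $p$-part of the level of $f\otimes\chi$ is $p^{\max(r,c+v,2v)}$, which matches the stated LCM. Since $f_\chi$ is an eigenform at level dividing this LCM and shares Hecke eigenvalues with $f\otimes\chi$ outside of $Np^v$, it coincides with $f\otimes\chi$ and is therefore new at the stated level. For the first half of part (2), $v\le u$ forces $c+v\le r$ and $2v\le r$, so the ambient LCM is just $Np^r$; the newform $f\otimes\chi$ sits at level exactly $Np^r$, and uniqueness identifies $f_\chi=f\otimes\chi$, with newness at $\ell\mid N$ following from the unramifiedness of $\chi_\ell$.

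For the span equality, I would use Fourier analysis on $\Z/p^u\Z$. The translates $f\,|\,\tbt{1}{a/p^u}{0}{1}$ for $a=0,\dots,p^u-1$ have $q$-expansions $\sum_n e^{2\pi i an/p^u}\,a_n(f)\,q^n$, and so span the set of forms $\sum_n\psi(n\bmod p^u)a_n(f)q^n$ for arbitrary $\psi\colon\Z/p^u\Z\to\C$. In the nontrivial case $u\ge1$, inspection of Table~\ref{reptable} combined with the constraint $c\le r-u$ forces $\lambda(\pi_{f,p})=0$, whence $a_p(f)=0$ and then $a_n(f)=0$ for all $n$ divisible by $p$ (via multiplicativity and the relation $a_{p^k}=a_p^k$ at primes dividing the level). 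Thus only the restriction of $\psi$ to $(\Z/p^u\Z)^\times$ matters, and since Dirichlet characters modulo $p^u$ form a basis of functions on $(\Z/p^u\Z)^\times$, the span of the twists $\{f_\chi\}$ equals the span of the translates. The principal obstacle is the verification of the local conductor formula in the supercuspidal case, which demands a careful interplay between the cuspidal type classification and character twists.
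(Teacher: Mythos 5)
Your argument for the span equality in part (2) matches the paper's own proof almost word for word: both observe that when $u\ge 1$ one has $a_n(f)=0$ for $p\mid n$, and then use Fourier inversion on $\Z/p^u\Z$ to identify the span of translates with the span of twists by characters modulo $p^u$. (The paper deduces $a_p(f)=0$ by noting that $u\ge 1$ excludes the two exceptional cases $c=r$ and $(r,c)=(1,0)$; that is your ``$\lambda(\pi_{f,p})=0$ by Table~\ref{reptable}'' argument in disguise.)

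Your treatment of part (1), however, has a genuine gap. The paper simply cites \cite[\S 3]{AtkinLiPseudoEigenvalues} for this part; you instead sketch a proof of the local conductor formula $c(\pi\otimes\chi)=\max(r,c+v,2v)$ for $v>u$. In the supercuspidal case your sketch invokes the bound $c\le\floor{r/2}$ from Prop.~\ref{supercuspidalcriterion}(3), but that bound is proved only for $p$-primitive newforms, and Prop.~\ref{twisting} carries no primitivity hypothesis --- it is in fact one of the tools used downstream to decide whether $f$ is $p$-primitive in the first place. For a non-primitive supercuspidal $\pi_{f,p}$ one can have $c>\floor{r/2}$, so $u=r-c<\floor{r/2}$ and the argument as written does not apply; one would either need to first twist down to a primitive representation and then verify the conductor formula is compatible with this reduction, or invoke the Atkin--Li estimates directly as the paper does. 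The concluding phrase ``larger-conductor $\chi$ produce larger conductors via the stratum calculus'' is an appeal to an unstated result rather than a proof; establishing the exact equality $c(\pi\otimes\chi)=\max(r,c+v,2v)$ for $v>u$ in the supercuspidal case is precisely the content of the cited Atkin--Li theorem, and the interplay you allude to between cuspidal types and twisting needs to be carried out rather than gestured at. The ``strengthening from $\le$ to $=$ by the symmetry $\pi=(\pi\otimes\chi)\otimes\chi^{-1}$'' for $v\le u$ also requires checking that the parameter $u'$ attached to $\pi\otimes\chi$ still dominates $v$, which is not automatic when the conductor drops.
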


\begin{proof} The first part follows from results in \cite[\S 3]{AtkinLiPseudoEigenvalues}.

For the second part, let $X_f$ be the span of the translates of $f$ by $\tbt 1 {p^{-u}\Z} 0 1$.  It is immediate that the span of the twists of $f$ is contained in $X_f$. For the reverse inclusion: if $c = r$, or if $r = 1$ and $c = 0$, then $u = 0$ so there is nothing to prove; so we may assume that this is not the case. Then since $f$ is a new eigenform, we must have $a_n(f) = 0$ whenever $p \mid n$.

Hence a basis for $X_f$ is given by the series $\sum_{n = i \bmod p^u} a_n q^n$ where $i$ runs through  $(\Z / p^u \Z)^\times$, and each such series is clearly a linear combination of twists of $f$ by Dirichlet characters modulo $p^u$.  This proves the reverse inclusion.
\end{proof}

We now construct a space of modular symbols which ``sees'' all of the relevant twists of $f$. Let $H_N$ be any subgroup of $(\Z / N \Z)^\times$ contained in the kernel of $\eps_N$, and let $H_p$ be the subgroup of $(\Z / p^r \Z)^\times$ which is the kernel of reduction modulo $p^{\max(c, \floor{r/2})}$. We let $H = H_N \times H_p \subseteq (\Z / Np^r\Z)^\times$. Then $\tbt 1 {p^{-u}} 0 1$ normalises $\Gamma = \Gamma_H(Np^r)$.

\begin{defn}
\label{typespace}
The {\em type space} $X^+_f$ is the subspace of $\bS_k(\Gamma)$ spanned by the translates of the plus eigensymbol $\sigma^+_f$ under $\tbt 1 {p^{-u}\Z_p} 0 1$.
\end{defn}

If $g$ is a newform in $S_k(\Gamma_1(M)$ for some $M \mid Np^r$, whose character is trivial on $H$, we write $\bS_k(\Gamma)[g]$ for the subspace of $\bS_k(\Gamma)$ on which the Hecke operators $T_m$ for $(m,Np)=1$ act as multiplication by $a_m(g)$.  Just as with spaces of modular forms, we have a decomposition
\[\bS_k(\Gamma) = \bS_k(\Gamma)_\pold \oplus \bS_k(\Gamma)_\pnew\]
where the summands are respectively spanned by the spaces $\bS_k(\Gamma)[g]$ for $g$ of $p$-level $< p^r$ and $p$-level exactly $p^r$ respectively;  these two spaces can be explicitly computed using degeneracy maps, as in \cite[\S 8.6]{SteinModularForms}.

The following is a consequence of Prop.~\ref{twisting}.

\begin{prop} Let $f\in S_k(\Gamma)$ be a newform of level $Np^r$, and let $g\in S_k(\Gamma)$ be a newform whose level divides $Np^r$.
The following are equivalent:
\begin{enumerate}
\item There exists a Dirichlet character $\chi$ of $p$-power conductor with $g=f\otimes\chi$.
\item $X^+_f \cap \bS_k(\Gamma)[g] \ne 0$.
\end{enumerate}
In particular, $f$ is $p$-primitive if and only if $X^+_f \subseteq \bS_k(\Gamma)_\pnew$.
\end{prop}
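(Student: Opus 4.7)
The plan is to transfer the modular-forms identities of Proposition~\ref{twisting} to the setting of modular symbols, thereby decomposing $X_f^+$ as a sum of generalised Hecke eigenspaces indexed by characters of $p$-power conductor dividing $p^u$.

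For $(1) \Rightarrow (2)$: suppose $g = f \otimes \chi$ for some character $\chi$ of $p$-power conductor. Since $g$ has level dividing $Np^r$, Proposition~\ref{twisting}(1) forces the conductor of $\chi$ to be at most $p^u$; write this conductor as $p^v$ with $v \le u$. I would then exhibit a nonzero element of $X_f^+ \cap \bS_k(\Gamma)[g]$ as the Gauss-sum combination
\[
\sigma_\chi := \sum_{j \in (\Z/p^v\Z)^\times} \bar\chi(j)\, \tbt{1}{j/p^v}{0}{1} \circ \sigma_f^+,
\]
which lies in $X_f^+$ since $\tbt{1}{j/p^v}{0}{1} \in \tbt{1}{p^{-u}\Z}{0}{1}$. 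The essential property to verify is that $\sigma_\chi$ is a $T_m$-eigensymbol of eigenvalue $a_m(g) = \chi(m) a_m(f)$ for all $(m,Np)=1$, so $\sigma_\chi \in \bS_k(\Gamma)[g]$. This is the modular-symbol shadow of the classical identity $\sum_{j \in (\Z/p^v\Z)^\times} \bar\chi(j) f(z + j/p^v) = \tau(\bar\chi) f_\chi$ on forms, combined with Proposition~\ref{twisting}(2), which identifies $f_\chi$ with $f \otimes \chi$; it transfers to symbols via the Hecke-equivariant pairing of \S\ref{modsymbdefinitions}(3), using the vanishing $a_n(f) = 0$ for $p \mid n$ (which holds as $f$ is $p$-new with $r \ge 1$).

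For $(2) \Rightarrow (1)$: conversely, I would show that as $\chi$ ranges over characters of $p$-power conductor dividing $p^u$, the symbols $\sigma_\chi$ together span $X_f^+$. This is the direct analogue on symbols of the spanning statement in Proposition~\ref{twisting}(2), and it follows from the same Fourier-inversion argument applied after transferring across the pairing. Because distinct newforms yield disjoint generalised Hecke eigenspaces $\bS_k(\Gamma)[\cdot]$ (strong multiplicity one for the $T_m$ with $(m,Np)=1$), any nonzero element of $X_f^+ \cap \bS_k(\Gamma)[g]$ must lie in one of the summands $\bS_k(\Gamma)[f \otimes \chi]$, forcing $g = f \otimes \chi$ for some character $\chi$ of $p$-power conductor.

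The ``in particular'' clause falls out immediately from this decomposition: $f$ is $p$-primitive precisely when no character twist $f \otimes \chi$ of $p$-power conductor has strictly smaller $p$-level, equivalently when every newform $g$ for which $\bS_k(\Gamma)[g]$ meets $X_f^+$ has $p$-level exactly $p^r$, i.e.\ is $p$-new; by the equivalence $(1)\Leftrightarrow(2)$ this is the inclusion $X_f^+ \subseteq \bS_k(\Gamma)_\pnew$. The main obstacle is the transfer of the Fourier-inversion identity from forms to symbols: one must confirm that the normalizer action $(\gamma,\sigma)\mapsto\gamma\circ\sigma$ on symbols is compatible, via the pairing, with the corresponding action on $S_k(\Gamma) \oplus \overline{S_k(\Gamma)}$, and that $\sigma_\chi$ is then genuinely a $T_m$-eigensymbol despite the individual translates $\tbt{1}{j/p^v}{0}{1} \circ \sigma_f^+$ not being so --- a routine but somewhat delicate bookkeeping computation.
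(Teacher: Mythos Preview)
Your approach is correct and coincides with the paper's. The paper records the proposition only as ``a consequence of Prop.~\ref{twisting}'' without further argument; the explicit mechanism you describe---the Gauss-sum operators $R_\chi = \sum_j \bar\chi(j)\tbt{1}{j/p^u}{}{1}$, the identification of $R_\chi(\sigma_f^+)$ as a Hecke eigensymbol for $f\otimes\chi$, and Fourier inversion to obtain spanning---is exactly what the paper spells out a few paragraphs later in the proof of Lemma~\ref{propertiesofXf}.

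One simplification worth noting: the ``main obstacle'' you flag, transferring identities from forms to symbols via the integration pairing, can be bypassed entirely. The key relation $T_m \circ R_\chi = \chi(m)\, R_\chi \circ T_m$ for $(m,Np)=1$ is a purely formal double-coset identity among operators on $\bS_k(\Gamma)$ (this is relation~\eqref{a} in Lemma~\ref{propertiesofXf}), so $R_\chi(\sigma_f^+) \in \bS_k(\Gamma)[f\otimes\chi]$ follows without any appeal to the pairing or to $q$-expansions. Likewise, the Fourier inversion showing that the $R_\chi(\sigma_f^+)$ span $X_f^+$ is a linear-algebra identity among the translation operators themselves. The only point where something beyond formal manipulation is needed is the nonvanishing $R_\chi(\sigma_f^+) \ne 0$ required for $(1)\Rightarrow(2)$; the paper obtains this in Lemma~\ref{propertiesofXf} from a dimension count, using the star involution $\Sigma$ together with Lemma~\ref{cmtwist} to separate the eigenspaces.
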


Hence the type space $X_f^+$ allows us to determine algorithmically whether or not $f$ is $p$-primitive.

\subsection{The type space and cuspidal pairs.}
\label{typespacecuspidalpairs}
When $f$ is $p$-primitive and $\pi_{f,p}$ is supercuspidal, then in fact $X_f^+$ tells us considerably more about $\pi_{f, p}$, as we shall show. As we have seen from Prop.~\ref{supercuspidalcriterion}, if $\pi_{f, p}$ is primitive and supercuspidal then we have $c \le \floor{r/2}$, so $H_p$ is the subgroup of elements of $\Z / p^r\Z$ which are 1 modulo $p^{\floor{r/2}}$.

As in \S~\ref{Generalitiescuspidaltypes}, let $K$ denote either the unramified or the ramified maximal compact-modulo-centre subgroup of $G = \GL_2(\Q_p)$, for $r$ even or odd respectively, and let $S(K)$ denote the intersection of $K$ with $\SL_2(\Q_p)$.  We observe that $K$ is generated by $S(K)$ along with the following subgroups and elements:
\begin{itemize}
\item The torus $T=\tbt{\hat{\Z}_p^\times}{}{}{\hat{\Z}_p^\times}$
\item The matrix $pI_2=\tbt{p}{}{}{p}$, if $K$ is unramified
\item The matrix $\Pi=\tbt{}{1}{-p}{}$, if $K$ is ramified.
\end{itemize}
Let $\tilde{S}(K)$ denote the subgroup of $\GL_2(\Q_p)$ generated by $S(K)$ and the single matrix $pI_2$ or $\Pi$ as $K$ is unramified or ramified, respectively.  Then $\tilde{S}(K)$ normalises the subgroup $S(K_n) = K_n \cap \SL_2(\Q_p)$.

The closure of $\Gamma$ in $\SL_2(\Q_p)$ is $\beta^{-1} S(K_n) \beta$, where $\beta=\tbt {p^{\floor{r/2}}} {}{} 1$ is the matrix from Thm.~\ref{Kmoduleisomorphism} and $n$ is the integer determined by Prop.~\ref{nversusr}.  As $S(K_n)$ is normal in $\tilde{S}(K)$, this allows us to define an action $\rho\from \tilde{S}(K)\to \GL(\bS_k(\Gamma))$. This action may be described concretely as follows.  For $\alpha\in \tilde{S}(K)$ and $\sigma \in \bS_k(\Gamma)$, let $\tilde{\alpha}=\tbt{a}{b}{c}{d}\in\GL_2(\Q)$ be a matrix with integer entries satisfying
\begin{enumerate}
\item The image of $\tilde{\alpha}$ in $\GL_2(\Q_p)$ lies in $\alpha S(K_n)$,
\item $c\equiv 0\pmod{N}$, and $d\bmod N\in H$, and
\item $\det \tilde{\alpha}$ is a power of $p$.
\end{enumerate}

Then the matrix $\beta^{-1}\tilde{\alpha}\beta$ normalizes $\Gamma$, and we may define
\[ \rho(\alpha)\sigma = \beta^{-1} \tilde{\alpha} \beta \circ \sigma. \]
Note that in the case of $K$ ramified, the operator $\rho(\Pi)$ is $\eps_N(p^{\floor{r/2}}) \cdot W_{p^r}$, where $W_{p^r}$ is the Atkin-Lehner operator; this follows from \cite[proposition 1.1]{AtkinLiPseudoEigenvalues}.

The next theorem plays a crucial role in the determination of the local component of a newform when that component is supercuspidal.

\begin{theorem}
\label{identifytypespace}
Let $f$ be a $p$-primitive newform of weight $k$, level $N p^r$ and nebentypus $\eps$. Assume that $\pi_{f,p}$ is a supercuspidal representation of $G_p=\GL_2(\Q_p)$. Let $X_f^+\subset \bS_k(\Gamma)$ be the subspace of modular symbols defined in definition \ref{typespace}.
\begin{enumerate}
\item  The space $X_f^+$ is preserved by the action $\rho$ of $\tilde{S}(K)$ on $\bS_k(\Gamma_H(Np^r))$.
\item  In fact, there exists a unique action $\rho_f\from K\to \GL(X_f^+)$ extending $\rho$ for which
\[
\rho_f\ptbt{a}{b}{c}{d}\sigma^+_f=\omega_p(d)^{-1}\sigma^+_f,\;a,d\in \Z_p^\times,\; c\equiv 0\pmod{p^r}\\
\]
In particular, the central character of $\rho_f$ is $\omega_p^{-1}$.
\item  Let $\tau$ be the contragredient of $\rho_f$. The pair $(K,\tau)$ is a cuspidal type, and $\Ind_K^G\tau\isom \pi_{f,p}$.
\end{enumerate}
\end{theorem}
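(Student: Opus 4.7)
The plan is to use the duality between modular symbols and cusp forms to transfer the problem into the adelic world, and then to apply Corollary \ref{Kmodulecor}.

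First I would identify $X_f^+$, as a representation of the closure of $\Gamma$ in $\SL_2(\Q_p)$, with (a direct sum of copies of) the contragredient $\check{\pi}_{f,p}^{U_p}$. Under the perfect pairing from item (3) of \S\ref{modsymbdefinitions}, the eigensymbol $\sigma_f^+$ dualises to a linear functional on the $f$-isotypic component of $S_k(\Gamma) \oplus \overline{S_k(\Gamma)}$, supported on the plus-eigenspace of $\tbt 1 0 0 {-1}$. Translating into adelic language via \S\ref{cuspforms}, the action of $\tbt 1 {p^{-u}\Z_p} 0 1$ on $\sigma_f^+$ is dual to the action of the same matrices on $\phi_f$. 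Since $f$ is $p$-primitive and $\pi_{f,p}$ is supercuspidal, Proposition \ref{supercuspidalcriterion} gives $u = \floor{r/2}$; moreover, $\phi_f$ is a new vector for $\pi_{f,p}$, so by equation \eqref{UpKn} the span of its translates by $\tbt 1 {p^{-\floor{r/2}}\Z_p} 0 1$ is exactly a model of $\pi_{f,p}^{U_p}$. This identifies $X_f^+$ with a direct sum of copies of $\check{\pi}_{f,p}^{U_p}$.

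Next I would check that, under this identification, the explicit action $\rho$ of $\tilde{S}(K)$ defined in \S\ref{typespacecuspidalpairs} corresponds to the natural action of $\tilde{S}(K')$ on $\check{\pi}_{f,p}^{U_p}$ pulled back by conjugation with $\beta$. This is essentially a compatibility check: the integer matrix lifts $\tilde{\alpha}$ representing $\alpha \in \tilde{S}(K)$ lie, at all places other than $p$, in the adelic congruence subgroup fixing $\phi_f$, and at $p$ they realise $\alpha$ modulo $S(K_n)$; hence their action on $\phi_f$ recovers the prescribed action of $\rho(\alpha)$. Granting this, part (1) follows at once. Moreover, under the identification the plus eigensymbol $\sigma_f^+$ corresponds to a new vector for $\check{\pi}_{f,p}$, which by Theorem \ref{newvector} transforms by $\omega_p(d)^{-1}$ under the congruence subgroup with $c \equiv 0 \pmod{p^r}$, matching the normalisation in part (2).

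With the identification in place, I would apply Corollary \ref{Kmodulecor} to $\check{\pi}_{f,p}$, the representation $X_f^+$, and the new vector $\sigma_f^+$. The corollary directly gives that $X_f^+$ is preserved by $K'$, that it is a cuspidal type for $K'$, and that $\Ind_{K'}^G X_f^+ \isom \check{\pi}_{f,p}$; conjugating by $\beta$ transports this to an action $\rho_f$ of $K$ on $X_f^+$. Uniqueness of such an extension of $\rho$ follows because $K$ is generated by $\tilde{S}(K)$ together with the diagonal torus $T$, and the prescribed normalisation pins down the action of $T$ on $\sigma_f^+$. The central character of $\rho_f$ is $\omega_p^{-1}$ because it must match that of the inducing data for $\check{\pi}_{f,p}$. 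Finally, taking $\tau = \check{\rho}_f$ reverses the central character back to $\omega_p$. Since compact induction commutes with contragredients for inducing subgroups containing the centre, $\Ind_K^G \tau$ is the contragredient of $\Ind_K^G \rho_f \isom \check{\pi}_{f,p}$, hence isomorphic to $\pi_{f,p}$, proving part (3).

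The main obstacle is the compatibility check in the second step: reconciling the three interlocking dualities at play (the modular-symbols/cusp-forms pairing, the involution $\tbt 1 0 0 {-1}$ giving the plus/minus decomposition, and the $\beta$-conjugation exchanging $K_n$-fixed and $U_p$-fixed vectors), and verifying that the concrete formula for $\rho$ in terms of integer matrix lifts does in fact implement the correct adelic action. Along the way one must also check that the plus/minus projection does not lose any local information at $p$, using the fact that the involution $\tbt 1 0 0 {-1}$ acts only on the archimedean factor of $\pi_f$.
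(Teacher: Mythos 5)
Your overall strategy --- transfer the problem to the adelic side, identify $\sigma_f^+$ with a new vector, and invoke Corollary~\ref{Kmodulecor} --- is the same in spirit as the paper's argument, but there is a genuine gap at the point where you apply the corollary. Corollary~\ref{Kmodulecor} presupposes a given $K'$-module structure on the ambient space; your $X_f^+\subset\bS_k(\Gamma)$ a priori carries only the action of $\tilde{S}(K)$, and the pairing with $S_k(\Gamma)\oplus\overline{S_k(\Gamma)}$ cannot supply the rest, because that space of cusp forms is stable under $\tilde{S}(K)$ but not under all of $K'$. Indeed, for $\alpha\in K'$ with $\det\alpha\notin p^{\Z}(1+p^{\floor{r/2}}\Z_p)$, there is no integral lift $\tilde\alpha$ landing in the right congruence subgroup at the primes dividing $N$ --- which is exactly the compatibility check you flag in your second step, and it fails outside $\tilde{S}(K)$. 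So ``applying Cor.~\ref{Kmodulecor} to $X_f^+$'' is circular: the $K'$-action the corollary requires is precisely what part~(2) of the theorem is supposed to produce.

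The paper bridges this by passing to modular symbols $\bS_k(U)$ on the disconnected Shimura curve $\Sh_U$ of \S\ref{Shimuracurves}, which is a disjoint union of copies of $Y(\Gamma)$ indexed by $\hat\Z^\times/\det U$. On $\bS_k(U)$ a $K'$-action exists a priori (elements changing the determinant permute the components), and the integration pairing with $H^1_P(\Sh_U,\mathcal{L}_k)$, together with Theorem~\ref{decompositionofH1}, places $\eta_f^+$ inside a $K'$-submodule isomorphic to copies of $(\check\pi_{f,p})^{U_p}$, where Corollary~\ref{Kmodulecor} genuinely applies. One then descends via the projection $\Phi\colon\bS_k(U)\to\bS_k(\Gamma)$ onto a single component, which sends $\eta_f^+$ to $\sigma_f^+$ and $W_f^+$ onto $X_f^+$; the dimension count $\dim X_f^+=\phi(p^{\floor{r/2}})$ from Lemma~\ref{propertiesofXf} shows this restriction is an isomorphism, so the $K$-action transports. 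Your sketch omits both the enlargement of the modular-symbol space and this dimension count, and without them the action of $K$ on $X_f^+$ is never actually constructed.
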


The proof of Thm.~\ref{identifytypespace} will occupy the following three subsections.  Before proving it, we record a lemma concerning the type space $X_f^+$.

\begin{lemma}
\label{propertiesofXf}
The space $X^+_f$ is the direct sum of the eigenspaces $\sigma_{f \otimes \chi}^{\chi(-1)}$ for characters $\chi$ modulo $p^{\floor{r/2}}$. In particular, it has dimension $\phi(p^{\floor{r/2}})$ and is stable under the operators $T_m$ and $\langle d \rangle$ for all $m$ and $d$, and under the Atkin-Lehner operator $W_{p^r}$.
\end{lemma}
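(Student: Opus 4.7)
The plan is to Fourier-decompose the action of the translations $\beta_a := \tbt{1}{a/p^u}{0}{1}$ on $X_f^+$ via Dirichlet characters $\chi$ modulo $p^u$, where $u = \floor{r/2}$. Since $\pi_{f,p}$ is supercuspidal, Prop.~\ref{supercuspidalcriterion} gives $c \leq u$, so Prop.~\ref{twisting}(2) is in force: every twist $f\otimes\chi$ is a newform of level exactly $Np^r$ whose nebentype is trivial on $H$, and hence $\sigma^\pm_{f\otimes\chi}$ is a well-defined element of $\bS_k(\Gamma)$.

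For each character $\chi$ modulo $p^u$ (primitive of conductor $p^v$ with $v \leq u$) I would set
\[
\sigma_\chi := \sum_{a \in (\Z/p^v\Z)^\times} \chi^{-1}(a)\,\sigma^+_f\vert\tbt{1}{a/p^v}{0}{1} \in X_f^+.
\]
Two short verifications identify this element: a direct $q$-expansion calculation (using that $\sigma_\chi$ lies in the $\eps\chi^2$-diamond eigenspace, so $T_m$ acts with that nebentype, together with $a_n(f)=0$ for $p\mid n$) gives $T_m\sigma_\chi = a_m(f\otimes\chi)\,\sigma_\chi$ for $(m,Np)=1$; and, writing $\iota = \tbt{1}{0}{0}{-1}$, the conjugation identity $\iota\beta_a\iota = \beta_{-a}$ together with $\iota\sigma^+_f = \sigma^+_f$ yields $\iota\sigma_\chi = \chi(-1)\sigma_\chi$. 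Since $\bS_k(\Gamma)[f\otimes\chi]$ is two-dimensional and spanned by $\sigma^\pm_{f\otimes\chi}$, this forces $\sigma_\chi \in \C\,\sigma^{\chi(-1)}_{f\otimes\chi}$.

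The inclusion $X_f^+ \subseteq \bigoplus_\chi \C\sigma^{\chi(-1)}_{f\otimes\chi}$ then follows from the Hecke-isotypic decomposition of each translate, combined with the proposition preceding this lemma (which restricts the Hecke eigensystems appearing in $X_f^+$ to the set $\{f\otimes\chi\}$) and the $\iota$-identity above. The reverse inclusion requires $\sigma^{\chi(-1)}_{f\otimes\chi} \in X_f^+$ for every $\chi$: the case $\chi=1$ is $\sigma^+_f = \sigma^+_{f\otimes 1}$ itself (the trivial translate), while for $\chi\neq 1$ the key step is a pairing argument against the cusp form $f\otimes\bar\chi$, which via the adjoint identity $\langle\sigma\vert\beta_a, h\rangle = \langle\sigma, h\vert\beta_{-a}\rangle$ and extraction of the $f$-isotypic component of $(f\otimes\bar\chi)\vert\beta_{-a}$ evaluates to a Gauss-sum multiple of a nonzero period of $f$. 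This non-vanishing is the principal technical obstacle; all other steps are formal. Finally, stability of the decomposition under $T_m$ and $\langle d\rangle$ is automatic from the eigenspace description, and stability under $W_{p^r}$ follows because Atkin–Lehner sends each newform $f\otimes\chi$ to a pseudo-eigenvalue scalar multiple of a newform in the same set $\{f\otimes\chi' : \chi' \bmod p^u\}$.
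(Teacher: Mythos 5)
Your proposal follows the same blueprint as the paper's proof: Fourier-decompose the translation operators into twist operators indexed by Dirichlet characters, identify each Fourier component as a scalar multiple of an eigensymbol $\sigma_{f\otimes\chi}^{\chi(-1)}$ via the commutation relations with $T_m$ and the star involution, and deduce $W_{p^r}$-stability from Atkin--Lehner commutation. So in broad outline you and the paper agree.

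There is one genuine difference, and it is to your credit. You normalize the twisting operator at the conductor of $\chi$: that is, $\sigma_\chi = \sum_{a \in (\Z/p^v\Z)^\times}\chi^{-1}(a)\,\sigma_f^+|\tbt{1}{a/p^v}{0}{1}$ where $p^v = \mathrm{cond}(\chi)$. The paper instead uses $R_\chi = \sum_{a\in(\Z/p^u\Z)^\times}\chi(a)^{-1}\tbt{1}{a/p^u}{0}{1}$ with the uniform modulus $u = \floor{r/2}$ for every $\chi$, and asserts that the $R_\chi(\sigma_f^+)$ span $X_f^+$ ``by Fourier inversion.'' When $u\geq 2$ and $\chi$ is imprimitive modulo $p^u$, the relevant Gauss sum $\tau(\bar\chi)$ at modulus $p^u$ vanishes, so $R_\chi(\sigma_f^+) = 0$: Fourier inversion over $(\Z/p^u\Z)^\times$ by itself only identifies $X_f^+$ with the span of translates by \emph{units} $a$, which is strictly smaller than $X_f^+$ (and only accounts for the primitive characters). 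Your conductor-level normalization ensures each $\sigma_\chi$ is the Gauss sum $\tau^{(v)}(\bar\chi)\neq 0$ times a nonzero eigensymbol, which is exactly what is needed to produce every summand $\sigma_{f\otimes\chi}^{\chi(-1)}$ inside $X_f^+$.

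Two small cautions about your writeup. First, for the inclusion $X_f^+\subseteq\bigoplus_\chi\C\sigma_{f\otimes\chi}^{\chi(-1)}$: the Hecke-isotypic decomposition and the unnumbered proposition restrict which eigensystems $[f\otimes\chi]$ can appear, but they do not by themselves decide which $\Sigma$-eigensign appears in each; the identity $\iota T_a = T_{-a}$ on individual translates does not alone exclude both parities. What does the job is the fact that $X_f^+$ is spanned by the $\sigma_\chi$, each of which is a pure $\chi(-1)$-eigenvector; and the passage from ``all translates'' to ``translates by units'' uses $a_p(f)=0$ (available here since $\pi_{f,p}$ is supercuspidal), exactly as in Prop.~\ref{twisting}(2). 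You should make this chain explicit. Second, the ``non-vanishing'' you flag as the principal obstacle is genuine but not difficult: it reduces, by the adjointness you quote, to the nonvanishing of $\tau^{(v)}(\bar\chi)$ (automatic since $\chi$ is primitive at its own conductor) and to the nondegeneracy of the pairing of $\sigma_f^+$ with the $f$-component of $S_k(\Gamma)\oplus\overline{S_k(\Gamma)}$, which is part of the perfectness of the modular symbol pairing. So your sketch is sound and needs only to be written out, not supplemented by a new idea.
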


\begin{proof}
We define operators $R_\chi$, $\Sigma$ and $m_p$ in $\End \bS_k(\Gamma_H(Np^r))$ as follows.  For a Dirichlet character $\chi$ modulo $p^{\floor{r/2}}$, define
\[ R_\chi=\sum_{u \in (\Z / p^{\floor{r/2}} \Z)^\times} \chi(u)^{-1} \tbt{1}{p^{-\floor{r/2}}u} 0 1.\]
Let $\Sigma= \tbt 1 0 0 {-1}$ be the star involution.  Finally, for $m\in \Z$ relatively prime to $Np$, let $m_p\in (\Z / N p^{\floor{r/2}} \Z)^\times$ denote the unique element congruent to $1 \pmod N$ and $m \pmod {p^{\floor{r/2}}}$.

 We gather the following elementary commutation relations:
\begin{align}
T_m \circ R_\chi &= \chi(m) R_\chi \circ T_m \label{a}\\
\Sigma \circ R_\chi &= \chi(-1) R_\chi \circ \Sigma \label{b}\\
T_m \circ W_{p^r} &= W_{p^r} \circ T_m \circ \langle m_p \rangle^{-1} \label{c}\\
\Sigma \circ W_{p^r} &= \langle (-1)_p \rangle \circ W_{p^r} \circ \Sigma \label{d}.
\end{align}

For any character $\chi$ modulo $p^{\floor{r/2}}$, we have $R_\chi(\sigma^+_f)\in X_f^+$.  Hence $X_f^+$ contains the span of the $R_\chi(\sigma^+_f)$.  In fact $X_f^+$ equals the span of the $R_\chi(\sigma_f^+)$, by Fourier inversion.   Relations \eqref{a} and \eqref{b} show that $R_\chi(\sigma^+_f)$ lies in the space $\bS_k(\Gamma_H(Np^r))[f \otimes \chi]$ and is an eigenvector for $\Sigma$ with eigenvalue $\chi(-1)$. This forces $R_\chi(\sigma^+_f)$ to be a scalar multiple of $\sigma_{f \otimes \chi}^{\chi(-1)}$, since the $\Sigma$-eigenspaces in $\bS_k(\Gamma_H(Np^r))[f \otimes \chi]$ are 1-dimensional. Note that the eigenspaces $\sigma_{f \otimes \chi}^{\chi(-1)}$ are all distinct, since by Lemma~\ref{cmtwist} we can only have $f \otimes \chi = f \otimes \chi'$ for distinct characters $\chi$ and $\chi'$ if $\chi(-1) = - \chi'(-1)$.

Moreover, by relation \eqref{c}, for each $\chi$, $W_{p^r}(\sigma_{f \otimes \chi}^{\chi(-1)})$ lies in the eigenspace $\bS_k(\Gamma_H(Np^r))[f \otimes \eps_p^{-1}\chi^{-1}]$ where $\eps_p$ is the $p$-part of the character of $f$; and \eqref{d} implies that it is a $\Sigma$-eigenvector with eigenvalue $\eps_p(-1)\chi(-1)$. Thus $X_f^+$ is closed under the action of $W_{p^r}$.
\end{proof}

\subsection{Shimura curves}
\label{Shimuracurves}

The type space $X_f^+$ admits an action of the group $\tilde{S}(K)$, which we intend to extend to an action of $K$, the maximal compact-mod-center subgroup of $\GL_2(\Q_p)$. For this we must perform a comparison between two sorts of objects. On the one hand, we have the space of modular symbols $\bS_k(\Gamma_H(Np^r))$, which is amenable to computation and which admits an action by an ``$\SL_2$ group''. On the other hand, there is the cohomology space of a (usually disconnected) Shimura curve, which has a direct interpretation in terms of automorphic representations and which admits an action by a ``$\GL_2$ group''.

We recall the setup of Thm.~\ref{identifytypespace}.  Let $p$ be prime, $r\geq 2$ and integer, and $N\geq 1$ an integer prime to $p$.
Let $H_N\subset (\Z/N\Z)^\times$ be any subgroup, let $H_p\subset (\Z/p^r\Z)^\times$ be the kernel of reduction modulo $p^{\floor{r/2}}$, and let $H=H_N\times H_p\subset (\Z/Np^r\Z)^\times$.   Finally let $\Gamma=\Gamma_H(Np^r)$.

We construct the Shimura curve analogue of the modular curve $Y(\Gamma)$.  Let $U\subset \GL_2(\hat{\Z})$ be the open subgroup consisting of matrices $\tbt{a}{b}{c}{d}$ with
\begin{enumerate}
\item $c \equiv  0\pmod{Np^r}$
\item $d\; (\text{mod }N)\in H_N$
\item $a,d\; (\text{mod }p^r) \in  H_p.$
\end{enumerate}
Thus if $U=\prod_\ell U_\ell$ is the obvious decomposition into open subgroups $U_\ell\subset\GL_2(\Z_\ell)$, then $U_p$ is as in Eq.~\eqref{UpKn}.  That is, $U_p$ is conjugate in $\GL_2(\Q_p)$ to $K_n$, where $K$ is unramified (resp., ramified) and $r=2n$ (resp., $n+1$) as $n$ is even (resp., odd).

We write $\mathcal{H}^+$ for the usual upper half plane, and $\mathcal{H}^{\pm}$ for $\C\backslash\R$.  Let $\Sh_U$ be the Shimura curve of level $U$:
\[ \Sh_U =  \GL_2(\Q)\backslash\left(\GL_2(\mathbf{A}_f)\times \mathcal{H}^{\pm}\right) / \Gamma. \]

We have $U\cap \GL_2(\Q)=\Gamma$.  Therefore the connected components of $\Sh_U$ are in bijection with $\hat{\Z}^\times/\det U = (\Z/p^{\floor{n/2}})^\times$, and each connected component is isomorphic to the modular curve $Y(\Gamma)$.

Fix a weight $k\geq 2$, and let $V=\Sym^{k-2}\C^2$ be the $(k-2)$-fold symmetric power of the tautological representation of $\GL_2(\Q)$.
We write $\mathcal{L}_k$ for the corresponding vector bundle on $\Sh_U$.  The following theorem relates the cohomology of $\mathcal{L}_k$ to automorphic representations arising from newforms of level $\Gamma$:

\begin{theorem}
\label{decompositionofH1}
The space $H^1_P(\Sh_U,\mathcal{L}_k)$ decomposes as a direct sum
\[
H^1_P(\Sh_U,\mathcal{L}_k)\isom \bigoplus_f H^1(\mathfrak{g},K_\infty,\pi_{f,\infty}\otimes V)\otimes \left(\pi_{f,\finite}\right)^U
\]
over cuspidal newforms $f$ of weight $k$. Here $\pi_f=\pi_{f,\finite}\otimes\pi_{f,\infty}$ is the automorphic representation corresponding to $f$, $\mathfrak{g}$ is the Lie algebra of $\GL_2(\R)$, and $K_\infty\subset \GL_2(\R)$ is a maximal compact subgroup. For each $f$, the Lie algebra cohomology $H^1(\mathfrak{g},K_\infty,\pi_{f,\infty}\otimes V)$ has dimension 2.
\end{theorem}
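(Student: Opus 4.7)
The plan is to reduce this statement to the Matsushima--Borel--Wallach decomposition of the cuspidal cohomology of an arithmetic quotient, together with the standard computation of the $(\mathfrak{g},K_\infty)$-cohomology of the weight-$k$ discrete series of $\GL_2(\R)$.

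First I would identify $H^1_P(\Sh_U,\mathcal{L}_k)$ with cuspidal cohomology. Since $H^1_P$ is defined as the image of $H^1_c$ in $H^1$, and $\Sh_U$ is a disjoint union of open modular curves, this is precisely the Eichler--Shimura parabolic cohomology, which admits the adelic description
\[ H^1_P(\Sh_U,\mathcal{L}_k) \isom H^1\!\left(\mathfrak{g},K_\infty;\; \mathcal{A}_0(\GL_2(\Q)\backslash \GL_2(\A),\omega)^U \otimes V\right), \]
where $\mathcal{A}_0$ denotes the space of cusp forms on $\GL_2(\A)$ with central character $\omega$ and $V=\Sym^{k-2}\C^2$. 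One must verify that non-cuspidal automorphic contributions vanish in $H^1_P$: the residual part of the discrete $L^2$-spectrum for $\GL_2$ consists only of characters factoring through the determinant (whose $(\mathfrak{g},K_\infty)$-cohomology with coefficients in $V$ vanishes in degree $1$ for $k\geq 2$), while the Eisenstein contribution to $H^1$ is killed by passage to $H^1_P$.

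Next, the cuspidal spectrum decomposes with multiplicity one by strong multiplicity one for $\GL_2$, giving a Hilbert direct sum $\mathcal{A}_0 = \bigoplus_\pi \pi$. Combined with the factorisation $\pi = \pi_\infty \otimes \pi_\finite$, this yields
\[ H^1_P(\Sh_U,\mathcal{L}_k) \isom \bigoplus_\pi H^1(\mathfrak{g},K_\infty;\,\pi_\infty \otimes V) \otimes (\pi_\finite)^U. \]
The archimedean factor vanishes unless $\pi_\infty$ has infinitesimal character matching that of the dual of $V$, which by the classification of unitarisable $(\mathfrak{g},K_\infty)$-modules with cohomology forces $\pi_\infty$ to be the weight-$k$ discrete series described in the introduction. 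Cuspidal representations $\pi$ satisfying this archimedean constraint are in bijection with cuspidal newforms $f$ of weight $k$, with $(\pi_\finite)^U$ encoding both the newform $f$ and its oldform contributions at level $U$.

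Finally, the dimension claim $\dim H^1(\mathfrak{g},K_\infty;\pi_{f,\infty} \otimes V)=2$ is a direct Lie algebra cohomology calculation using the explicit $K_\infty$-type structure of the discrete series, as in Borel--Wallach, Ch.~II \S5: there is exactly one contribution from the holomorphic discrete series summand of $\pi_{f,\infty}$ and one from its anti-holomorphic counterpart, totalling dimension $2$. The main obstacle is really only the first step, namely identifying topological parabolic cohomology with $(\mathfrak{g},K_\infty)$-cohomology of the cuspidal subspace and ruling out the residual and Eisenstein contributions; for $\GL_2$ over $\Q$ this goes back to Eichler--Shimura and in practice is cited rather than reproved.
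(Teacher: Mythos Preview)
Your proposal is correct and follows the same route as the paper, which simply cites Carayol~\cite{Carayol1983}, \S2.2.4 for the Matsushima-type decomposition and Rogawski--Tunnell~\cite{RogawskiTunnell}, Prop.~1.5(b) for the dimension-$2$ computation; you have unpacked what lies behind those citations. One small slip: in your displayed identification you fix a single central character $\omega$, whereas the theorem sums over all cuspidal newforms of weight $k$, so you should either drop the $\omega$ or sum over it.
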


\begin{proof}
For the decomposition of $H^1_P(\Sh_U,\mathcal{L}_k)$ into subspaces indexed by automorphic representations, see~\cite{Carayol1983}, \S2.2.4. For the statement about $H^1(\mathfrak{g},K_\infty,\pi_{f,\infty}\otimes V)$, see~\cite{RogawskiTunnell}, Prop. 1.5(b).
\end{proof}

Recall the group $K'$ of Eq.~\eqref{Kprime}:  $K'$ contains $U_p$ as a normal subgroup, so there is a natural action $K'\to \Aut\Sh_U$.

\subsection{Modular symbols on \texorpdfstring{$\Sh_U$}{ShU}}  We have a decomposition of $\Sh_U$ into connected components:
\begin{equation}
\label{ShUdecomp}
\Sh_U = \coprod_a Y(\Gamma)_a
\end{equation}
where $a$ runs over a set of representatives for $\hat{\Z}^\times/\det U$ and $Y(\Gamma)_a$ is the (isomorphic) image of $Y(\Gamma)$ under the map induced by $z\mapsto \left(\tbt{a}{}{}{1},z\right)$.  The group $K'$ acts on $\Sh_U$, and the stabilizer of any component is

\[
\tilde{S}(K)' =
\set{g\in K'\biggm\vert \det g\in p^\Z(1+p^{\floor{r/2}}\Z_p)}.
\]

Let $\bS_k(U)$ be the space of cuspidal modular symbols of weight $k$ on $\Sh_U$.  Thus $\bS_k(U)$ is the direct sum of copies of $\bS_k(\Gamma)$ indexed by $a$ as in Eq.~\eqref{ShUdecomp}.  The space $\bS_k(U)$ admits an action $\rho\from K'\to\GL(\bS_k(U))$ as well as an action of the Hecke algebra.

Let $f\in S_k(\Gamma)$ be a $p$-primitive newform of level $Np^r$,
and let $\sigma_f^+\in\bS_k(\Gamma)$ be a plus eigensymbol for $f$.  Let
\[\eta_f^+=\sum_a \rho\ptbt{a}{}{}{1} \sigma_f^+ \in \bS_k(U). \]
Then the Hecke operators act on $\eta_f^+$ with the same eigenvalues as they do on $f$ (or on $\sigma_f^+$).

\begin{lemma}
\label{Wf} Let $W_f^+\subset \bS_k(U)$ be the span of the translates of $\eta_f^+$ under the $\tbt{1}{p^{-\floor{r/2}}\Z_p}{}{1}$.  Then $W_f^+$ is preserved by $K'$.  Let $\tau$ be the representation of $K$ which is contragredient to the representation of $K$ on $W_f^+$.  Then $(K',\tau)$ is a cuspidal pair and $\Ind_{K'}^G\tau\isom \pi_{f,p}$.
\end{lemma}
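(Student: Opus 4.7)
The strategy is to apply Corollary \ref{Kmodulecor} with $v=\eta_f^+$ and $V$ a suitable $K'$-stable subspace of $\bS_k(U)$ containing it.

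First I would cut out an appropriate subspace of $\bS_k(U)$ using Hecke operators away from $p$. Let $\bS_k(U)_f \subseteq \bS_k(U)$ denote the subspace on which $T_\ell$ acts as $a_\ell(f)$ for every $\ell \nmid Np$. Because $K'$-translates commute with these Hecke operators, $\bS_k(U)_f$ is $K'$-stable. Combining Theorem \ref{decompositionofH1} with the Eichler--Shimura identification of modular symbols with (a piece of) parabolic cohomology, one obtains a $K'$-equivariant decomposition
\[ \bS_k(U)_f \isom \bigoplus_g M_g \otimes (\pi_{g,p})^{U_p}, \]
where $g$ ranges over newforms of weight $k$ and level dividing $Np^r$ whose prime-to-$Np$ Hecke eigenvalues match those of $f$, and $M_g$ is a finite-dimensional space collecting the $(\mathfrak{g},K_\infty)$-cohomology factor and the one-dimensional local factors $(\pi_{g,\ell})^{U_\ell}$ for $\ell\neq p$. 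By strong multiplicity one each such $g$ is a twist $f\otimes\chi$ by a Dirichlet character of $p$-power conductor, and by Proposition \ref{twisting} the conductor is at most $p^{\floor{r/2}}$. Since $\det(K')\subseteq p^\Z(1+p^{\floor{r/2}}\Z_p)$, the character $\chi_p\circ\det$ is trivial on $K'$, so each summand $(\pi_{g,p})^{U_p}$ is isomorphic as a $K'$-module to $(\pi_{f,p})^{U_p}$. Thus $\bS_k(U)_f$ is a direct sum of copies of $(\pi_{f,p})^{U_p}$, as required by the corollary.

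The key technical step is to verify that $\eta_f^+$ satisfies the new vector transformation law: $\rho\ptbt{a}{b}{c}{d}\eta_f^+=\omega(d)\eta_f^+$ for every $\tbt{a}{b}{c}{d}\in\GL_2(\Z_p)$ with $c\equiv 0\pmod{p^r}$. Factoring such a matrix modulo $U_p$ as a product of a central element $\tbt{d}{}{}{d}$, the permutation matrix $\tbt{a/d}{}{}{1}$, and an element of $U_p$, I would argue in three steps: the $U_p$-factor acts trivially because $\eta_f^+$ is $U$-invariant on $\Sh_U$; the matrix $\tbt{a/d}{}{}{1}$ permutes the connected components of $\Sh_U$ according to its determinant, and the sum $\eta_f^+=\sum_a\rho\ptbt{a}{}{}{1}\sigma_f^+$ was defined precisely to be symmetric under this permutation; and the central $\tbt{d}{}{}{d}$ acts by the central character, which reduces on a single component to the scalar $\omega_p(d)^{\pm 1}$ by the nebentypus relation for $\sigma_f^+$ (the sign depending on the duality convention between modular symbols and automorphic forms). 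This exhibits $\eta_f^+$ as a new vector for $\bS_k(U)_f$.

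Corollary \ref{Kmodulecor} then immediately yields that $W_f^+$ is preserved by $K'$ and that the induced representation $\Ind_{K'}^G W_f^+$ is isomorphic to the $K'$-representation appearing inside $\bS_k(U)_f$. Passing to the contragredient $\tau$ (the action on $\bS_k(U)$ being contragredient to that on automorphic forms) gives the cuspidal pair $(K',\tau)$ with $\Ind_{K'}^G\tau\isom\pi_{f,p}$.

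The main obstacle in the plan is the second step: reconciling the explicit matrix action $\rho$ of \S\ref{typespacecuspidalpairs}, defined via integral representatives normalising $\Gamma$, with the natural adelic $K'$-action on $\Sh_U$, and tracking the duality between $\bS_k(U)$ and $H^1_P(\Sh_U,\mathcal{L}_k)$ carefully enough to identify the central character appearing in the hypothesis of Corollary \ref{Kmodulecor}.
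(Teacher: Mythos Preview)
Your overall architecture matches the paper's: isolate a $K'$-stable subspace of $\bS_k(U)$ that is a direct sum of copies of $(\check\pi_{f,p})^{U_p}$, verify that $\eta_f^+$ satisfies the new-vector transformation law, and invoke Corollary~\ref{Kmodulecor}. The transformation-law step and the final application of the corollary are fine, and the contragredience is handled correctly.

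The gap is in your first step. The assertion $\det(K')\subseteq p^{\Z}(1+p^{\floor{r/2}}\Z_p)$ is false: conjugation preserves determinants, so $\det(K')=\det(K)$, and in both the ramified and unramified cases $\det(K)$ contains all of $\Z_p^\times$ (indeed $\det(K)=p^{2\Z}\Z_p^\times$ in the unramified case and $\Q_p^\times$ in the ramified case). You may be thinking of the subgroup $\tilde S(K)'$ defined just before the lemma, which \emph{does} have determinant contained in $p^{\Z}(1+p^{\floor{r/2}}\Z_p)$, but that is strictly smaller than $K'$. Consequently, for a nontrivial twist $g=f\otimes\chi$ with $\chi$ of $p$-power conductor, the $K'$-modules $(\pi_{g,p})^{U_p}$ and $(\pi_{f,p})^{U_p}$ differ by the nontrivial character $\psi_p\circ\det\vert_{K'}$ and are \emph{not} isomorphic. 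So your space $\bS_k(U)_f$, cut out only by $T_\ell$ for $\ell\nmid Np$, is not a sum of copies of a single $(\check\pi_{f,p})^{U_p}$, and Corollary~\ref{Kmodulecor} does not apply to it.

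The paper avoids this by placing $\eta_f^+$ directly in the single $f$-summand of the decomposition of Theorem~\ref{decompositionofH1} (via duality), using the sentence immediately preceding the lemma: ``the Hecke operators act on $\eta_f^+$ with the same eigenvalues as they do on $f$.'' That summand, being $H^1(\mathfrak g,K_\infty,\pi_{f,\infty}\otimes V)^*\otimes((\pi_{f,\mathrm{fin}})^U)^*$, is already a direct sum of finitely many copies of $(\check\pi_{f,p})^{U_p}$ as a $K'$-module, and Corollary~\ref{Kmodulecor} applies immediately. To repair your argument along the paper's lines, you should enlarge the system of operators used to cut out the eigenspace (in particular the diamond operators, which distinguish $f$ from its character-twists) rather than attempt to identify the twist-summands with one another.
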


\begin{proof}
The integration pairing
\[ \bS_k(U)\times H^1_P(\Sh_U,\mathcal{L}_k)\to \C\]
is perfect and equivariant for the actions of both the Hecke algebra and the group $K'$.   By Prop.~\ref{decompositionofH1}, $\eta^+_f$ lies in a $K'$-submodule of $\bS_k(U)$ isomorphic to a direct sum of copies of $\left(\check{\pi}_{f,p}\right)^{U_p}$.  Furthermore, a calculation shows that
$\tbt{a}{b}{c}{d}\in\GL_2(\Z_p)$ acts on $\eta_f^+$ as $\omega_p(d)^{-1}$ whenever $c\equiv 0\pmod{p^r}$, where $\omega_p$ is the central character of $\pi_{f,p}$.  The lemma now follows from Cor.~\ref{Kmodulecor}.
\end{proof}

\subsection{Conclusion of the proof of Thm.~\ref{identifytypespace}}  Consider the projection $\Phi\from\bS_k(U)\to \bS_k(\Gamma)$ which restricts a modular symbol to the component $Y(\Gamma)_1\subset \Sh_U$.  Then $\Phi$ transforms the action of $\beta^{-1}\tilde{S}(K)\beta$ on $\bS_k(U)$ into the action of $\tilde{S}(K)$ on $\bS_k(U)$ described in \S\ref{typespacecuspidalpairs}.  We have $\Phi(\eta_f^+)=\sigma_f^+$, and consequently $\Phi(W_f^+)= X_f^+$.  On the other hand, $\dim W_f^+=\dim X_f^+$ by Prop.~\ref{propertiesofXf}.  Therefore the restriction of $\Phi$ to $W_f^+$ furnishes an isomorphism $W_f^+\to X_f^+$ compatible with the actions of $\tilde{S}(K)'$ and $\tilde{S}(K)$.  This establishes part (1) of the theorem.  The group $K'$ preserves $W_f^+$ and therefore induces an action of $K$ on $X_f^+$.  Part (2) follows from the corresponding fact concerning $\eta_f^+$, and part (3) follows from Lemma~\ref{Wf}.

\section{The algorithm}

We now use the above results to describe an explicit algorithm to determine $\pi_{f, p}$, given a new eigenform $f \in S_k(\Gamma_1(Np^r), \eps)$ with $r \ge 1$ (specified by the sequence of Hecke eigenvalues $a_\ell(f)$ for $\ell \nmid N$, as elements of some number field).

\newcounter{algstep}
\begin{list}{Step \arabic{algstep}:}{\usecounter{algstep}}
\item \label{step:PS} Check whether $a_p(f) = 0$. If $a_p(f) \ne 0$, then by Thm.~\ref{supercuspidalcriterion}, one of the following must hold:
 \begin{itemize}
  \item The $p$-part of the conductor of $\eps$ is $p^r$. In this case, $\pi_{f, p}$ is isomorphic to the principal series $\pi(\chi_1, \chi_2)$, where $\chi_1$ is the unramified character with $\chi_1(p) = a_p(f) / p^{\frac{1-k}2}$, and $\chi_2$ is the unique character such that $\chi_2 |_{\Z_p} = \eps_p^{-1}$ and $\chi_2(p) = \eps_N(p) / \chi_1(p)$, where $\eps_p$ and $\eps_N$ are respectively the $p$-part and the prime-to-$p$-part of $\eps$.
  \item $r = 1$ and the conductor of $\eps$ is prime to $p$. In this case, $a_p(f)^2 = p^{k-2} \eps(p)$, and $\pi_{f, p}$ is isomorphic to the special representation $\St \otimes \chi$, where $\chi$ is the unramified character with $\chi(p) = a_p(f) / p^{\frac{k-2}{2}}$.
 \end{itemize}
If $a_p(f) = 0$, we continue to step \ref{step:Msymb}.
\item \label{step:Msymb} Calculate a basis for the space of modular symbols $\bS_k(\Gamma_H(Np^r))$ of \S\ref{modsymbdefinitions}. Calculate the matrices representing the action on this space of the star involution $\Sigma$ and the Hecke operators $T_\ell$ for primes $\ell \ne p$ up to the Sturm bound. Use these to calculate the plus eigensymbol $\sigma^+_f$ attached to $f$, by intersecting the kernels of $\Sigma - 1$ and $T_\ell - a_\ell(f)$ until a 1-dimensional space is obtained.
\item \label{step:Xf} Calculate the subspace $X_f^+ \subseteq \bS_k(\Gamma_H(Np^r))$ spanned by the orbit of $\sigma^+_f$ under $\tbt{1}{p^{-u}\Z}{}{1}$.
\item \label{step:IsPrimitive} Calculate the matrices of the two degeneracy maps from $\bS_k(\Gamma_H(Np^r))$ to $\bS_k(\Gamma_{H'}(Np^{r-1}))$, where $H'$ is the image of $H$ modulo $Np^{r-1}$. The sum of the images of $X_f^+$ is a Hecke-invariant subspace; if it is nonzero, then it contains an eigenform, which corresponds to a twist $f'$ of $f$ having level $Np^{r-1}$. In this case, go back to step 1 with $f'$ in place of $f$. If this does not occur, we may continue to step \ref{step:SK} knowing that $f$ is $p$-primitive.
\item \label{step:SK} Compute the matrix of $\rho(g)$ acting on $X_f^+$ for each $g$ in a set of generators for the group $\tilde S(K) / S(K_n)$, where the groups $\tilde S(K)$ and $S(K_n)$ and the representation $\rho$ are as defined in \S\ref{typespacecuspidalpairs}. One may take the set of generators to be
\[ \left\{ \tbt 1 1 0 1, \tbt 0 {-1} 1 0, \tbt p 0 0 p \right\} \]
for $K$ unramified, or
\[ \left\{ \tbt 1 1 0 1, \tbt 1 0 p 1, \tbt a 0 0 {a^{-1}}, \tbt 0 {-1} p 0 \right\} \]
for $K$ ramified (where $a$ is any generator of $(\Z / p\Z)^\times$); note that $\rho\ptbt 1 1 0 1$ has been computed in the course of step \ref{step:Xf}.
\item \label{step:K0}  For each $a$ running through a set of generators of $\Z_p^\times$, determine the space of operators $\lambda_a\in \GL(X_f^+)$ satisfying $\lambda_a\rho(g)=\rho(\delta_a g\delta_a^{-1})\lambda_a$, where $g$ runs through a set of generators of $S(K)$ and where $\delta_a=\tbt{a}{}{}{1}$.  By Thm.~\ref{identifytypespace}, there will be a unique system $\lambda_a$ of such operators which each fix $\sigma_f^+$.  Let $\rho_f$ be the unique extension of $\rho$ to $K$ which satisfies $\rho_f(\delta_a)=\lambda_a$ for each $a$.
\item Output the representation $(K, \rho_f)$.  By Thm.~\ref{identifytypespace} we have $\pi_{f,p}\isom\Ind_K^{\GL_2(\Q_p)}\check{\rho}_f$.
\end{list}

\section{Examples}

\begin{example}
Let $f$ be the newform $q - q^2 + q^3 + q^4 + \dots$ of weight 2 and level 50, and take $p = 5$. Since $a_p(f) = 0$, we proceed to step \ref{step:Msymb}. We find that the subgroup $\Gamma = \Gamma_H(50)$, where $H$ consists of the elements of $(\Z / 50\Z)^\times$ that are $1 \pmod 5$, has index 360 in $\SL_2(\Z)$, and the space $\bS_2(\Gamma)$ has dimension 31. Continuing to step \ref{step:Xf}, we find that $X_f^+$ is 4-dimensional. At step \ref{step:IsPrimitive}, we find that $X_f^+$ is contained in the 18-dimensional subspace that is the intersection of the kernels of the two degeneracy maps to level 10. Thus $f$ is indeed $5$-primitive.

Since the exponent of $5$ in the conductor of $f$ is even, the group $K=\Q_p^\times\GL_2(Z_p)$ is unramified.  In step \ref{step:SK}, we compute the representation $\rho\from S(K)\to \GL(X_f^+)$.  The matrices of $\rho\ptbt{1}{1}{0}{1}$ and $\rho\ptbt{0}{-1}{1}{0}$ are respectively
\[ A_1 = \left(\begin{array}{rrrr}
1 & 0 & 0 & -1 \\
0 & 0 & -1 & 0 \\
-1 & 1 & -2 & 1 \\
1 & 0 & -1 & 0
\end{array}\right)
\quad \text{and} \quad
A_2 = \left(\begin{array}{rrrr}
-1 & 1 & -2 & 1 \\
0 & 0 & -1 & 0 \\
0 & -1 & 0 & 0 \\
0 & -1 & -1 & 1
\end{array}\right)\]
with respect to a certain basis of $X_f^+$ whose first vector is $\sigma_f^+$.

Since $f$ has trivial character, $\rho\ptbt{5}{0}{0}{5}$ is the identity.  In fact $\rho$ is already irreducible as a representation of $S(K)$, as
there are no non-scalar $4 \times 4$ matrices commuting with both $A_1$ and $A_2$, so $X_f^+$ is irreducible as a representation of $\SL_2(\Z / 5\Z)$.

Continuing to step \ref{step:K0}, we choose $a = 2$ as a generator of $(\Z / 5\Z)^\times$, and compute the action of the conjugated generators $\tbt{1}{a^{-1}}{0}{1}$ and $\tbt{0}{-a^{-1}}{a}{0}$. Either by a direct modular symbol computation, or (more efficiently) by expressing the conjugated generators as words in the original generators and using the matrices already computed, we find that these two elements act by
\[A_1' = \left(\begin{array}{rrrr}
-2 & 1 & -1 & 1 \\
-2 & 2 & -2 & 1 \\
-1 & 2 & -1 & 0 \\
-2 & 2 & -1 & 0
\end{array}\right)\text{and} \quad
A_2' = \left(\begin{array}{rrrr}
-1 & 2 & -1 & 0 \\
0 & 1 & 0 & 0 \\
0 & 0 & 1 & 0 \\
0 & 1 & 1 & -1
\end{array}\right).\]
A straightforward computation shows that the space of matrices such that $x A_i = A_i' x$ for $i = 1, 2$ is 1-dimensional, spanned by the matrix
\[B = \left(\begin{array}{rrrr}
1 & -2 & 1 & 0 \\
0 & -1 & 0 & 1 \\
0 & 0 & -1 & 1 \\
0 & -1 & -1 & 1
\end{array}\right).\]
As mentioned above, the first vector in our basis of $X_f^+$ is $\sigma^+_f$. So $B$ acts trivially on $\sigma^+_f$, and the desired extension of $\rho$ to a representation of $\GL_2(\Z / 5\Z)$ is given by $\rho\ptbt{2}{0}{0}{1} = B$. Since $f$ has trivial character, $\rho$ factors through $\operatorname{PGL}_2(\Z / 5\Z)$, and we see that the character of $\rho$ is given by:
\[
 \begin{array}{|ccccccc|}
\hline
\tbt 1 0 0 1 &
\tbt 2 0 0 1 &
\tbt 4 0 0 1 &
\tbt 1 1 0 1 &
\tbt 0 2 1 0 &
\tbt 0 1 1 2 &
\tbt 0 2 1 2 \\
\hline
4 &
0 &
0 &
-1 &
-2 &
1 &
1\\
\hline
\end{array}
\]
In particular, using the trace relation of theorem \ref{charcalc} we see that $\rho$ corresponds to the admissible pair $(\Q_{25}, \theta)$, where $\Q_{25}$ denotes the unramified quadratic extension of $\Q_5$ and $\theta$ is either of the two characters of $\Q_{25}^\times$ trivial on $\Q_5^\times$ and having order 3. (Since these two characters are interchanged by the nontrivial element of $\Gal(\Q_{25} / \Q_5)$, they correspond to equivalent admissible pairs.)
\end{example}

\begin{example}
 Let $\lambda$ be any root of $x^4 + 9 = 0$. Then $\lambda^2 / 3$ is a square root of $-1$; we denote this by $i$. Then there is an eigenform $f \in S_3(\Gamma_1(25))$ with $q$-expansion
\[ f = q + \lambda q^2 + i \lambda q^3 - i q^4 - 3 q^6 - 4\lambda q^7 + \dots\]
whose character is the unique character $\chi$ modulo 5 mapping 2 to $i$. Then, since $\chi$ generates the group of characters modulo 5, a basis for $X_f^+$ is given by the four eigensymbols $\sigma_f^+, \sigma_{f \otimes \chi}^-, \sigma_{f \otimes \chi^2}^+$ and $\sigma_{f \otimes \chi^3}^-$. All of the corresponding forms are are in fact Galois-conjugate to $f$ over $\Q$ (they are inner twists of $f$).

One readily calculates that $X_f^+$ is irreducible as an $\SL_2(\Z/5\Z)$-representation, and the space of matrices intertwining $X_f^+$ with its conjugate by $\tbt 2 {}{} 1$ is 1-dimensional; any such matrix has the four eigensymbols above as its eigenvectors, with eigenvalues $\pm t, \pm it$ for some scalar $t$. Thus, in particular, there is a unique extension of $\rho$ to a representation of $\GL_2(\Z / 5\Z)$ with $\rho\ptbt 2 {}{} 1 \sigma^+_f = \sigma^+_f$. (Note that $g = f \otimes \chi^2$ has the same character as $f$, and moreover $X_f^+$ and $X_g^+$ are the same space. The extensions to $\GL_2(\F_5)$ corresponding to $f$ and $g$ differ only in the choice of sign of $\rho \ptbt 2 {}{} 1$, and in particular they coincide as representations of the index 2 subgroup of $\GL_2(\F_5)$.)

One checks that $\F_{25}^\times$ is generated by a root $\alpha$ of $x^2 - x + 2$, so the non-split torus in $\GL_2(\Z / 5\Z)$ is generated by $g = \tbt{1}{-1}{2}{0}$. If $\theta, \theta^\sigma$ are the characters of $\Q_{25}^\times$ corresponding to $\rho$, then we find that $\tr \rho(g) = -\lambda$, so $\theta(\alpha) + \theta(\alpha^\sigma) = \lambda$. Since $\theta(\alpha) \theta(\alpha^\sigma) = \theta(\alpha \alpha^\sigma) = \theta(2) = i$, we see that $\theta(\alpha)$ and $\theta(\alpha^\sigma)$ are the roots of $x^2 - \lambda x + i$, which are roots of unity of order 24.
\end{example}

\begin{rmk}
If $\theta$ is a character of $\Q_{25}^\times$ of level 1 such that $(\Q_{25}, \theta)$ is an admissible pair, then $\theta$ is determined up to unramified twists by the image of a root $\alpha$ of $x^2 - x + 2$. This must be a root of unity whose order divides 24, and if $\theta \ne \theta^\sigma$, it cannot be a factor of 4. Hence it is one of $\{3, 6, 8, 12, 24\}$. Conversely, the order of $\theta(\alpha)$ determines $\theta$ uniquely up to Galois conjugacy and unramified twists. Table \ref{table5adic} gives examples of forms corresponding to each of these equivalence classes.

\begin{table}[ht]
\begin{tabular}{|r|r|r|l|}
 \hline
 Order of $\theta(\alpha)$ & Weight & Level & $q$-expansion\\
 \hline
 3 	& 2 	& 50 & $q - q^2 + q^3 + q^4 + \dots$\\
 6	& 4	& 25 & $q + q^2 + 7q^3 - 7q^4 + \dots$\\
 8	& 3	& 50 & $q + (i - 1) q^2 + (3i + 3) q^3 - 2i q^4 + \dots$\\
 12	& 4	& 25 & $q + i q^2 - 7i q^3 + 7 q^4 + \dots$\\
 24	& 3	& 25 & $q + \lambda q^2 + i \lambda q^3 - i q^4 + \dots$ (where $\lambda^2 = 3i$)\\
 \hline
\end{tabular}
\caption{Examples of forms realising each equivalence class of supercuspidal representations of $\GL_2(\Q_5)$ of conductor $5^2$, modulo unramified twists and Galois conjugacy}
\label{table5adic}
\end{table}
\end{rmk}

\begin{example}
For the unique Galois orbit of newforms of weight 2, trivial character and level 81, of which a representative is $q + \sqrt{3}q^2 + q^4 - \sqrt{3} q^5 + \dots$, the space $X_f^+$ is a 6-dimensional representation of $\operatorname{PGL}_2(\Z / 9\Z)$, with $\tbt 3 0 0 3$ acting trivially.

One checks that the group
\[ \frac{(\Z_9 / 9 \Z_9)^\times}{(\Z_3 / 9 \Z_3)^\times}\]
is cyclic of order 12 and is generated by a root $\alpha$ of $x^2 - x + 2$. This corresponds to the generator $g = \tbt 1 {-1} 2 0$ of the non-split torus in $\operatorname{PGL}_2(\Z / 9\Z)$. We find that $\rho(g)$ has trace $-\sqrt3$, so by theorem \ref{charcalc}, $\rho$ corresponds to the pair of characters of $\Q_9^\times$ mapping $\alpha$ to the roots of the polynomial $x^2 + \sqrt 3 x + 1$.
\end{example}

\begin{example}
 Let $f$ be the newform of weight 2, trivial character and level 27. Then $X_f^+$ is a 2-dimensional representation of the normaliser of the Iwahori subgroup of $\GL_2(\Z_3)$, with trivial central character. One checks that we have $\tr \rho(g) = 0$ whenever $\det g$ does not lie in the index 2 subgroup of $\Q_3^\times / \Q_3^{\times 2}$ generated by $3$. Hence $\rho \cong \rho \otimes \chi$, where $\chi$ is the character of $\Q_3^\times$ corresponding via class field theory to the extension $E = \Q_3(\sqrt{-3})$, and we deduce that $\rho$ corresponds to the admissible pair $(E, \theta)$ for some character $\theta$ of $E^\times$ of level 2, trivial on $\Q_3^\times$. Using theorem \ref{charcalc} one checks that $\theta(\sqrt{-3}) = -1$ and $\theta(1 + \sqrt{-3})$ is a root of unity of order 3. (In fact $f$ has CM by the field $\Q(\sqrt{-3})$, so one can obtain the same result by global methods.)

Similarly, one checks that both of the forms of weight 2, trivial character and level 54 correspond to characters of $\Q_3(\sqrt{3})$.
\end{example}

\begin{example}
In our final example, we show how our algorithm can give a complete description of the 2-adic Weil-Deligne representation $\sigma_{f,2}=\sigma(\pi_{f,2})\from W_{\Q_2}\to\GL_2(\C)$ attached to a cusp form $f$ when the local component $\pi_{f,2}$ is supercuspidal but does not arise from an admissible pair.  This means exactly that the image of $\sigma_{f,2}(W_{\Q_2})$ in $\PGL_2(\C)$ is not a dihedral group:  it must instead be isomorphic to $A_4$ or $S_4$.  Weil showed in~\cite{WeilExercicesDyadiques} that the $A_4$ (tetrahedral) case cannot occur, and that the $S_4$ (octahedral) cases all arise from the 3-torsion in a handful of elliptic curves defined over $\Q_2$.

Our goal here is to calculate $\sigma_{f,2}$ for those representations of octahedral type.   Of course, $\sigma_{f,2}$ is determined by $\pi_{f,2}$ in light of Carayol's theorem.   In practice, however, it seems very difficult to pass from $\pi_{f,2}$ to $\sigma_{f,2}$ in an explicit manner.  It is much easier to find a form $g$ arising from one of Weil's elliptic curves for which $\pi_{f,2}$ and $\pi_{g,2}$ differ by an (explicit) character $\chi$ of $\Q_2^\times$.  Since $g$ arises from an elliptic curve, $\sigma_{g,2}$ may be computed directly.  Then $\sigma_{f,2}$ can be calculated because it differs from $\sigma_{g,2}$ by the same character $\chi$.

For example, let $f=q-4q^3 - 2q^5 + 24q^7 +\dots$ be the unique newform of level 8, weight 4 and trivial character.  Our algorithm shows that the type space $X_f^+$ is the 1-dimensional space spanned by $\sigma_f^+$, and this admits an action of the ramified maximal compact-mod-center subgroup $K\subset\GL_2(\Q_p)$ through a character $\rho_f$ of $K$.  The restriction of $\rho_f$ to the Iwahori subgroup is given by $\rho_f\ptbt a b {2c} d =(-1)^{b + c}$.  As for the uniformizer $\Pi = \tbt 0 {1} {-2} 0$,  we have $\rho_f(\Pi)=1$.    Since $\rho_f$ is 1-dimensional, there are no nontrivial characters $\chi$ such that $\rho \cong \rho \otimes \chi$; thus $\pi_{f, 2}$ is an ``exceptional'' representation of $\GL_2(\Q_2)$ in the sense of \cite[\S 44.1]{BushnellHenniartLocalLanglands}.  This means that the irreducible Weil-Deligne representation $\sigma_{f,2}\from W_{\Q_2}\to\GL_2(\C)$ has projective image $S_4$.

Now let $g = q - q^3 - 2q^5 + \dots$ be the newform corresponding to the unique isogeny class of elliptic curves over $\Q$ of conductor 24.  One such elliptic curve has Weierstrass equation $y^2 = x^3 - x^2 - 4x + 4$.  We find that $\rho_g$ is also one-dimensional;  the character $\rho_g$ differs from $\rho_f$ only in that $\rho_g(\Pi)=-1$.  Thus $\pi_{f, 2} = \pi_{g, 2} \otimes \chi$, where $\chi$ is the unramified quadratic character of $\Q_2^\times$.  This reduces the calculation of $\rho_{f,2}$ to that of $\rho_{g,2}$.  Let $K=\Q_2(E[3])$.   Then $\Gal(K/\Q_2)\isom \GL_2(\mathbf{F}_3)\isom \tilde{S}_4$ is the nonsplit double cover of $S_4$.  The $\tilde{S}_4$-extensions of $\Q_2$ are classified in~\cite{WeilExercicesDyadiques}, \S 36.  The exponent of 2 in the conductor of $E$ is 3, and this is already enough to determine $K$ completely:  By the calculations appearing in \S 6 of~\cite{RioDyadicExercises}, $K/\Q_2$ is the unique $\tilde{S}_4$-extension whose discriminant has $2$-adic valuation 64.

Since $\rho_{g,2}$ is an irreducible representation of the Weil-Deligne group, we may write $\sigma_{g,2}=\sigma\otimes\psi$, where $\psi$ is an unramified character and $\sigma$ is of ``Galois type" (meaning it has finite image, or equivalently that it extends to a continuous representation of $\Gal(\overline{\Q}_2/\Q_2)$), see~\cite{TateNumberTheoreticBackground}, (2.2.1).  Here we may take $\sigma$ to be a representation of $W_{\Q_2}$ which factors through a faithful representation $\Gal(K/\Q_2)\isom\GL_2(\mathbf{F}_3)\injects\GL_2(\C)$.  Then $\sigma$ is unique up to isomorphism.  The character $\psi$ sends a Frobenius element $\text{Fr}$ to $\sqrt{-2}$:  this is forced by the relation $\det\sigma_{g,2}({\text{Fr}})=2$.  These observations determine $\sigma_{g,2}$ completely;  hence they determine $\sigma_{f,2}$ completely as well.
\end{example}

\bibliographystyle{amsalpha}
\bibliography{ComputationOfLocalComponents}

\end{document}